\theoremstyle{plain}
\newtheorem{theorem}{Theorem}[section]
\newtheorem{lemma}[theorem]{Lemma}
\newtheorem{definition}[theorem]{Definition}
\newtheorem{proposition}[theorem]{Proposition}
\theoremstyle{remark}
\newtheorem{remark}{Remark}[section]
\DeclareMathOperator{\Teich(S)}{Teich(S)}
\newcommand{\Mod}{\mbox{\rm Mod}}
\begin{document}

\date{}
\title[Curvature operator]
{The Riemannian Sectional Curvature Operator Of The Weil-Petersson Metric  and Its Application}

\author{Yunhui Wu}

\address{Department of Mathematics\\
         Rice University\\
         Houston, Texas, 77005-1892\\}
\email{yw22@rice.edu}

\begin{abstract}
Fix a number $g>1$, let $S$ be a close surface of genus $g$, and $\Teich(S)$ be the Teichm\"uller space of $S$ endowed with the Weil-Petersson metric. In this paper we show that the Riemannian sectional curvature operator of $\Teich(S)$ is non-positive definite. As an application we show that any twist harmonic map from rank-one hyperbolic spaces $H_{Q,m}=Sp(m,1)/Sp(m)\cdot Sp(1)$ or $H_{O,2}=F_{4}^{-20}/SO(9)$ into $\Teich(S)$ is a constant.
\end{abstract}


\maketitle

\section{Introduction}
Let $S$ be a closed surface of genus $g$ where $g>1$, and $T_{g}$ be the Teichm\"uller space of $S$. $T_{g}$ carries various metrics that have respective properties. For example, the Teichm\"uller metric is a complete Finsler metric. The McMullen metric, Ricci metric, and perturbed Ricci metric have bounded geometry \cite{LSY04, LSY05, McM00}. The Weil-Petersson metric is K\"ahler \cite{Ahlfors} and incomplete \cite{Chu,  Wolpert1}. There are also some other metrics on $T_{g}$ like the Bergman metric, Caratheodory metric, K\"ahler-Einstein metric, Kobayashi metric, and so on. In \cite{LSY04,LSY05}, the authors showed that some metrics listed above are comparable. In this paper we focus on the Weil-Petersson case. Throughout this paper, we let $\Teich(S)$ denote $T_{g}$ endowed with the Weil-Petersson metric. The geometry of the Weil-Petersson metric has been well studied in the past decades. One can refer to Wolpert's recent nice book \cite{Wolpert4} for details.

The curvature aspect of $\Teich(S)$ is very interesting, which plays an important role in the geometry of Weil-Petersson metric. This aspect has been studied over the past several decades. Ahlfors in \cite{Ahlfors} showed that the holomorphic sectional curvatures are negative. Tromba \cite{Tromba} and Wolpert \cite{Wol86} independently showed the sectional curvature of $\Teich(S)$ is negative. Moreover, in \cite{Wol86} the author proved the Royden's conjecture, which says that the holomorphic curvatures are bounded above by a negative number that only depends on the topology of the surface, by establishing the curvature formula (see theorem \ref{cfow}). Wolf in \cite{Wolf89} used harmonic tools to give another proof of this curvature formula. After that, people has been applying this formula to study the curvature of $\Teich(S)$ in more detail. For example, in \cite{Schumacher} Schumacher showed that $\Teich(S)$ has strongly negative curvature in the sense of Siu (see \cite{Siu}) that is stronger than negative sectional curvature. Huang in \cite{Huang1} showed there is no negative upper bound for the sectional curvature. In \cite{LSY} Liu-Sun-Yau also used Wolpert's curvature formula to show that $\Teich(S)$ has dual Nakano negative curvature, which says that the complex curvature operator on the dual tangent bundle is positive in some sense. For some other related problems one can refer to \cite{BF06,Huang1,Huang,LSY04,LSY05,Teo,Wolpert3,Wolpert5}.

Let $X \in \Teich(S)$. We can view $X$ as a hyperbolic metric on $S$. One of our purposes in this paper is to study the Riemannian sectional curvature operator of $\Teich(S)$ at $X$. The method in this paper is highly influenced by the methods in \cite{LSY,Schumacher,Wol86}, which essentially applied the curvature formula, the Cauchy-Schwarz inequality and the positivity of the Green function for the operator $(\Delta-2)^{-1}$, where $\Delta$ is the Beltrami-Laplace operator on $X$. What we need more in this paper is the symmetry of the Green function for $(\Delta-2)^{-1}$. 

Before giving any statements let us state some neccessary background. Let $X$ be a point in $\Teich(S)$, and $T_{X}\Teich(S)$ be the tangent space that is identified with the harmonic Beltrami differentials at $X$. Assume that $\{\mu_{i}\}_{i=1}^{3g-3}$ is a basis for $T_{X}\Teich(S)$, and $\frac{\partial}{\partial t_{i}}$ is the vector fields corresponding to $\mu_{i}$. Locally, $t_{i}$ is a holomorphic coordinate around $X$; let $t_{i}=x_{i}+\textbf{i}y_{i}$.  $(x_{1},x_{2},...,x_{3g-3},y_{1},y_{2},...,y_{3g-3})$ gives a real smooth coordinate around $X$. Since $\Teich(S)$ is a Riemannian manifold, it is natural to define the curvature tensor on it, which is denoted by $R(\cdot,\cdot,\cdot,\cdot)$. Let $T\Teich(S)$ be the real tangent bundle of $\Teich(S)$ and $\wedge^{2}T\Teich(S)$ be the wedge product of two copies of $T\Teich(S)$. The \textsl{curvature operator} $Q$ is defined on $\wedge^{2}T\Teich(S)$ by 
$Q(V_{1}\wedge V_{2},V_{3}\wedge V_{4})=R(V_{1},V_{2},V_{3},V_{4})$ and extended linearly, where $V_{i}$ are real vectors. It is easy to see that $Q$ is a bilinear symmetric form (one can see more details in \cite{Jost}).  

Now we can state our first result. 
\begin{theorem}\label{conp}
Let $S=S_{g}$ be a closed surface of genus $g>1$ and $\Teich(S)$ be the Teichm\"uller space of $S$ endowed with the Weil-Petersson metric. And let $\textbf{J}$ be the almost complex structure on $\Teich(S)$ and $Q$ be the curvature operator of $\Teich(S)$. Then, for any $X \in \Teich(S)$, we have\\
(1) $Q$ is non-positive definite, i.e., $Q(A,A)\geq 0$ for all $A \in \wedge^{2}T_{X}\Teich(S)$.\\
(2) $Q(A,A)=0$ if and only if there exists an element $B$ in $\wedge^{2}T_{X}\Teich(S)$ such that
$A=B-\textbf{J}\circ B$.
\end{theorem}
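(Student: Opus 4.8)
The plan is to feed Wolpert's curvature formula (Theorem~\ref{cfow}) into the definition of $Q$ and exploit the K\"ahler symmetries of the curvature tensor together with three properties of the self-adjoint operator $D$ occurring in that formula: that $D$ is a positive operator, that its Green's kernel is symmetric, $D(z,w)=D(w,z)$, and that this kernel is strictly positive, $D(z,w)>0$. To begin, I would complexify $T_X\Teich(S)=T^{1,0}\oplus T^{0,1}$, write $\partial_\alpha$ for $\partial/\partial t_\alpha$ and $\partial_{\bar\alpha}$ for its conjugate, and extend $R$ complex-multilinearly. Since $\Teich(S)$ is K\"ahler one has $R(\textbf{J}U,\textbf{J}V,Z,W)=R(U,V,Z,W)$; combining this with the pair symmetry and the first Bianchi identity, every complexified component of $R$ with an odd number of holomorphic entries vanishes, as do the ``block'' components $R_{\alpha\beta\bar\gamma\bar\delta}$, so that only the components $R_{\alpha\bar\beta\gamma\bar\delta}$ (and their symmetry images) survive. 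Decomposing a real $A\in\wedge^{2}T_X\Teich(S)$ as $A=A^{2,0}+A^{1,1}+A^{0,2}$ along $\wedge^{2}T^{1,0}\oplus(T^{1,0}\otimes T^{0,1})\oplus\wedge^{2}T^{0,1}$, all cross terms in $Q(A,A)=R(A,A)$ then drop out and one is left with $Q(A,A)=R(A^{1,1},A^{1,1})$. Writing $A^{1,1}=\sum_{\alpha,\beta}M_{\alpha\beta}\,\partial_\alpha\wedge\partial_{\bar\beta}$, the reality of $A$ forces $M=(M_{\alpha\beta})$ to be skew-Hermitian, $M+M^{*}=0$, and
\[
Q(A,A)=\sum_{\alpha,\beta,\gamma,\delta}M_{\alpha\beta}\,M_{\gamma\delta}\,R_{\alpha\bar\beta\gamma\bar\delta};
\]
in particular $Q(A,A)$ depends on $A$ only through its $(1,1)$-part.

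Next I would insert $R_{\alpha\bar\beta\gamma\bar\delta}=\int_X D(\mu_\alpha\overline{\mu_\beta})(\mu_\gamma\overline{\mu_\delta})\,dA+\int_X D(\mu_\alpha\overline{\mu_\delta})(\mu_\gamma\overline{\mu_\beta})\,dA$ and collapse the fourfold sum by means of
\[
\phi(z,w):=\sum_{\alpha,\beta}M_{\alpha\beta}\,\mu_\alpha(z)\,\overline{\mu_\beta(w)},
\]
the function $\phi(z,z)=\sum_{\alpha,\beta}M_{\alpha\beta}\mu_\alpha\overline{\mu_\beta}$ being globally defined on $X$. Writing $D$ through its kernel and regrouping --- here is where I would use the symmetry $D(z,w)=D(w,z)$, which allows $D$ to be moved freely between the two factors while terms are collected --- the first family of integrals collapses to $\int_X\!\int_X D(z,w)\,\phi(z,z)\,\phi(w,w)\,dA(z)\,dA(w)$ and the second to $\int_X\!\int_X D(z,w)\,\phi(w,z)\,\phi(z,w)\,dA(z)\,dA(w)$. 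Skew-Hermiticity of $M$ gives $\phi(w,z)=-\overline{\phi(z,w)}$, whence $\phi(z,z)=\textbf{i}\,\psi(z)$ for a real function $\psi$ and $\phi(w,z)\,\phi(z,w)=-|\phi(z,w)|^{2}$. Collecting, $Q(A,A)$ equals --- up to the overall sign dictated by the curvature convention in Theorem~\ref{cfow} --- the sum
\[
\langle D\psi,\psi\rangle_{L^{2}(X)}+\int_X\!\int_X D(z,w)\,|\phi(z,w)|^{2}\,dA(z)\,dA(w),
\]
both of whose terms are $\geq 0$ since $D$ is a positive operator and $D(z,w)\geq 0$. This establishes part~(1).

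For the equality case, if $Q(A,A)=0$ then both terms above vanish; the strict positivity $D(z,w)>0$ forces $\phi\equiv 0$, and since $\{\mu_\alpha\}_{\alpha=1}^{3g-3}$ is a basis this forces $M=0$, i.e.\ the $(1,1)$-part of $A$ vanishes. Conversely, extending $\textbf{J}$ to $\wedge^{2}T_X\Teich(S)$ by $\textbf{J}(V_1\wedge V_2)=(\textbf{J}V_1)\wedge(\textbf{J}V_2)$, one checks that $\textbf{J}=+\mathrm{Id}$ on $T^{1,0}\otimes T^{0,1}$ while $\textbf{J}=-\mathrm{Id}$ on $\wedge^{2}T^{1,0}\oplus\wedge^{2}T^{0,1}$, so $\tfrac12(\mathrm{Id}-\textbf{J})$ is the projection onto the latter; hence $\{\,B-\textbf{J}\circ B:B\in\wedge^{2}T_X\Teich(S)\,\}$ is exactly the space of real $2$-vectors whose $(1,1)$-part vanishes, and by the reduction in the first step every such $A$ satisfies $Q(A,A)=0$. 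This gives part~(2).

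The routine ingredient here is Wolpert's formula itself. The main obstacle I anticipate is the bookkeeping of the first step --- carefully tracking the reality constraints and the $\textbf{J}$-decomposition of $\wedge^{2}$ of the \emph{real} tangent bundle, and verifying that every ``wrong'' complexified component of $R$ vanishes --- together with choosing the grouping of the fourfold sum that exhibits it as a manifestly sign-definite quantity. The genuinely new analytic input beyond the arguments of \cite{Schumacher, Wol86, LSY} is the symmetry of the Green's function for $(\Delta-2)^{-1}$, used in the regrouping, together with its strict positivity, used to identify the null space.
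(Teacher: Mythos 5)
Your proposal is correct, but it reaches Theorem \ref{conp} by a genuinely different route than the paper. The paper never complexifies $\wedge^{2}T_{X}\Teich(S)$: it decomposes it into the real blocks $\wedge^{2}T_{X}^{1}\Teich(S)$, $\wedge^{2}T_{X}^{2}\Teich(S)$, $\wedge^{2}T_{X}^{3}\Teich(S)$, derives explicit formulas for $Q$ on each block and on mixed elements (propositions \ref{coor}, \ref{cooxy}, \ref{cofon12}, \ref{cooxym}), reduces a general element to the first two blocks by replacing $a_{ij},c_{ij}$ with $d_{ij}=a_{ij}+c_{ij}$ (proposition \ref{corf}), and then must dominate the indefinite term $\Re\int_{X\times X}G(z,w)F(z,w)F(w,z)$ by $\int_{X\times X}G(z,w)|F(z,w)|^{2}$ via the Cauchy--Schwarz inequality, using both the positivity and the symmetry of $G$ (theorems \ref{coorn}, \ref{cooxymn}); the null space is then extracted from the coefficient conditions $a_{ij}+c_{ij}=a_{ji}+c_{ji}$, $b_{ij}=-b_{ji}$ and repackaged as $A=B-\textbf{J}\circ B$. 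Your complexification does that work up front: the K\"ahler vanishing $R(Z_{1},Z_{2},\cdot,\cdot)=0$ for $Z_{1},Z_{2}\in T^{1,0}$ (slightly more than proposition \ref{bpfcoc}(1) states, but proved by the same $\textbf{J}$-invariance) reduces $Q(A,A)$ to the $(1,1)$-part, reality makes $M$ skew-Hermitian, and then both terms of Wolpert's formula are already sign-definite --- $\phi(z,z)=\textbf{i}\psi$ turns the diagonal term into $-\int_{X}D(\psi)\psi\,dA$ and $\phi(w,z)=-\overline{\phi(z,w)}$ turns the other into $-\int_{X\times X}G|\phi|^{2}$ --- so no Cauchy--Schwarz is needed, and the equality case follows from the strict positivity of $G$ plus linear independence of the $\mu_{\alpha}$; your identification of $\{B-\textbf{J}\circ B\}$ with the real $2$-vectors of vanishing $(1,1)$-part is consistent with the paper because your $\textbf{J}(V_{1}\wedge V_{2})=\textbf{J}V_{1}\wedge\textbf{J}V_{2}$ agrees with the paper's $\textbf{J}\circ$ on the basis. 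What the paper's route buys is explicit block-by-block formulas and an entirely real-variable argument; what yours buys is brevity and a transparent description of the kernel as the $(-1)$-eigenspace of $\textbf{J}\circ$. Two small corrections: pin down the sign rather than leaving it ``up to convention'' --- with the paper's conventions your collapsed terms give $Q(A,A)=-\int_{X}D(\psi)\psi\,dA-\int_{X\times X}G|\phi|^{2}\leq 0$, which is the sign produced by the computations of sections \ref{coos} and \ref{4} (the inequality as printed in the statement of theorem \ref{conp} notwithstanding) --- and note that the symmetry $G(z,w)=G(w,z)$, which you advertise as the key new ingredient, is not actually used in your regrouping, since each sum collapses by the kernel representation alone; it is the paper's Cauchy--Schwarz comparison that needs the symmetry, whereas your argument needs only the positivity of $D$ and the strict positivity of $G$ from proposition \ref{psfg}.
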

\noindent $\textbf{J}\circ B$ is defined in section \ref{4}. \\

A direct corollary is that the sectional curvature of $\Teich(S)$ is negative \cite{Ahlfors, Tromba, Wol86}. Normally a metric of negative curvature may not  have non-positive definite curvature operator (see \cite{AF}).
\newline

In the second part of this paper we will study harmonic maps from certain rank-one spaces into $\Teich(S)$. For harmonic maps, there are a lot of very beautiful results when the target is either a complete Riemannian manifold with non-positive curvature operator or a complete non-positive curved metric space (see \cite{Cor,DW03,Yamada04}). In particular, if the domain is either the Quaternionic hyperbolic space or the Cayley plane, different rigid results for harmonic maps were established in \cite{GS, JY, MSY}. For harmonic maps into $\Teich(S)$, one can refer to the nice survey \cite{DW09}. In this paper we establish the following rigid result.

\begin{theorem}\label{hmrt}
Let $\Gamma$ be a lattice in a semisimple Lie group $G$ which is either $Sp(m,1)$ or $F_{4}^{-20}$, and $\Mod(S)$ be the mapping class group of $\Teich(S)$. Then, any twist harmonic map $f$ from $G/\Gamma$ into $\Teich(S)$ with respect to each homomorphism $\rho: \Gamma \rightarrow \Mod(S)$ must be a constant.
\end{theorem}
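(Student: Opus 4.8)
The plan is to use Theorem~\ref{conp} as the geometric input in the Bochner (``geometric superrigidity'') method for harmonic maps out of the rank-one symmetric spaces $H_{Q,m}$ and $H_{O,2}$, in the spirit of Corlette, Mok--Siu--Yeung and Jost--Yau. A twist harmonic map relative to $\rho\colon\Gamma\to\Mod(S)$ is the same datum as a $\rho$-equivariant harmonic map $F\colon X\to\Teich(S)$, where $X=G/K$ is $H_{Q,m}$ (resp.\ $H_{O,2}$), with $\Gamma$ acting on $X$ through the lattice embedding and on $\Teich(S)$ through $\rho$; the goal is to show $F$ is constant. If $\rho$ fixes a point of $\Teich(S)$ this is immediate, so assume not. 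Since the Weil--Petersson metric is incomplete, I would first pass to the $\mathrm{CAT}(0)$ metric completion $\overline{\Teich(S)}$ (the augmented Teichm\"uller space, on which $\Mod(S)$ still acts) and use the known fact---coming from the structure of the Weil--Petersson metric near the boundary strata (Daskalopoulos--Wentworth)---that a non-constant equivariant harmonic map into $\overline{\Teich(S)}$ has image in the smooth open part $\Teich(S)$; alternatively one runs the Bochner estimate directly on the singular target in the spirit of Gromov--Schoen. Either way $F$ may be treated as a smooth harmonic map into the Riemannian manifold $\Teich(S)$, of finite energy when $\Gamma$ is non-uniform, the needed integrations by parts over $\Gamma\backslash X$ being justified by the usual cutoff arguments (Jost--Yau, Mok--Siu--Yeung, and Daskalopoulos--Wentworth for Teichm\"uller targets).

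The mechanism is that $X$ carries a parallel $4$-form $\Omega$---the quaternionic-K\"ahler fundamental form of $H_{Q,m}$, the Cayley form of $H_{O,2}$. Contracting the Weitzenb\"ock formula for the $F^{*}T\Teich(S)$-valued harmonic $1$-form $dF$ against $\Omega$, integrating over $\Gamma\backslash X$, and reorganizing the curvature terms via the first Bianchi identity produces an identity of the form
\[
0\;=\;\int_{\Gamma\backslash X}\Big(\,\|P\nabla dF\|^{2}\;+\;c\sum_{a}\big\langle\, Q\big(\Theta_{a}(dF)\big),\,\Theta_{a}(dF)\,\big\rangle\,\Big),\qquad c>0,
\]
where $\Theta_{a}(dF)=\sum_{i}dF(e_{i})\wedge dF(I_{a}e_{i})$ are sections of $\wedge^{2}T\Teich(S)$ built from $dF$ and the (local) quaternionic operators $I_{a}$, and $P\nabla dF$ is the part of the second fundamental form orthogonal to the image of $\Omega$. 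What is special to $Sp(m,1)$ and $F_{4}^{-20}$---unlike $SU(m,1)$, where the analogous parallel form has degree $2$ and Siu's method only forces $\pm$holomorphicity---is that the target-curvature contribution assembles exactly into evaluations of the \emph{curvature operator} $Q$ rather than into a combination of sectional curvatures; this is precisely why Theorem~\ref{conp}, and not merely the negativity of the sectional curvature, is the hypothesis that is needed.

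Now invoke Theorem~\ref{conp}(1): $Q$ is non-positive definite, so each $\langle Q(\Theta_{a}(dF)),\Theta_{a}(dF)\rangle\ge 0$ while $\|P\nabla dF\|^{2}\ge 0$; since the total integral vanishes, both terms vanish identically, and the argument of Corlette and Mok--Siu--Yeung that quaternionic-pluriharmonicity forces $\nabla dF\equiv 0$ shows $F$ is totally geodesic. It remains to rule out a non-constant totally geodesic $\rho$-equivariant $F\colon X\to\Teich(S)$. Since $\nabla dF\equiv 0$, the distribution $\ker dF$ is parallel, hence---$X$ being irreducible---either all of $TX$, so $F$ is constant, or zero, so $dF$ is everywhere injective; likewise $F^{*}g_{\Teich(S)}=\langle dF\,\cdot\,,dF\,\cdot\,\rangle$ is a parallel symmetric $2$-tensor, hence equal to $c\,g_{X}$ for a constant $c>0$, and $\nabla dF\equiv 0$ gives $R^{\Teich(S)}(dFu,dFv,dFw,dFz)=c\,R^{X}(u,v,w,z)$. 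Choose $A\in\wedge^{2}T_{p}X$ with $Q^{X}(A,A)<0$; this is possible because, unlike a round sphere, $H_{Q,m}$ for $m\ge 2$ and $H_{O,2}$ have \emph{indefinite} curvature operator (dually to $\mathbb{HP}^{m}$ and $\mathbb{OP}^{2}$). Then $B:=(\wedge^{2}dF_{p})(A)$ is a nonzero element of $\wedge^{2}T_{F(p)}\Teich(S)$ with $Q^{\Teich(S)}(B,B)=c\,Q^{X}(A,A)<0$, contradicting Theorem~\ref{conp}(1). Hence $F$ is constant, which proves Theorem~\ref{hmrt}. (This also explains why $m\ge 2$ is required for $Sp(m,1)$: for $m=1$ one has $H_{Q,1}=H^{4}_{\mathbb R}$, whose curvature operator is sign-definite, and $Sp(1,1)$ carries no parallel $4$-form. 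Part~(2) of Theorem~\ref{conp} is not strictly needed above, but it gives an alternative endgame---vanishing of the curvature term puts each $dF(e_{i})\wedge dF(I_{a}e_{i})$ in the kernel of $Q$---which together with Kazhdan's property~(T) of $\Gamma$ also yields $F$ constant.)

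The step I expect to be the main obstacle is the careful derivation and bookkeeping of the Bochner identity: organizing the Corlette/Mok--Siu--Yeung/Jost--Yau computation with the parallel quaternionic, resp.\ Cayley, $4$-form so that the target-curvature term provably reduces to evaluations of the curvature operator $Q$ (the Bianchi rearrangement), and the subsequent upgrade from ``$P\nabla dF\equiv 0$'' to ``$F$ totally geodesic''. The incompleteness of the Weil--Petersson metric---confining $F$ to the smooth part, or running the argument on the singular $\mathrm{CAT}(0)$ completion---is a secondary but genuine technical point that has already been handled in this setting.
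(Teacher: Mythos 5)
Your reduction of the problem to ruling out a non-constant totally geodesic $\rho$-equivariant map is sound, and it is how the paper proceeds, except that the paper does not re-derive the Bochner identity: it observes that non-positivity of $Q$ gives non-positive curvature in the complexified sense of Mok--Siu--Yeung and quotes their Theorem 2 (also Corlette) as a black box to conclude that a non-constant twist harmonic map would have to be a totally geodesic immersion. (Your concern about Weil--Petersson incompleteness is also moot for the statement as given, since the twist harmonic map into $\Teich(S)$ is part of the hypothesis.) The genuine gap is in your endgame. The claim that $H_{Q,m}$ ($m\ge 2$) and the Cayley hyperbolic plane have \emph{indefinite} curvature operator is false: these are symmetric spaces of noncompact type, and every such space has semi-definite curvature operator --- dually to the fact that $\mathbb{HP}^m$ and the Cayley projective plane have non-negative semi-definite curvature operator, whose kernel consists of the bivectors annihilated by the bracket map $\wedge^2\mathfrak{p}\to\mathfrak{k}$. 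Hence there is no bivector $A$ on the domain with $Q^{X}(A,A)$ of the forbidden sign; your pushed-forward $B$ only satisfies the same semi-definite inequality that Theorem \ref{conp}(1) already guarantees, and no contradiction arises. Semi-definiteness alone cannot finish the argument, which is precisely why part (2) of Theorem \ref{conp} --- the statement you set aside as ``not strictly needed'' --- is essential.

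The correct endgame (the paper's Lemma \ref{51}) uses a \emph{null} bivector of the domain rather than a negative one: for a quaternionic line spanned by $v, Iv, Jv, Kv$, with $J$ aligned with the complex structure of the target along the image, the symmetries of the quaternionic curvature tensor give $Q^{X}(v\wedge Jv+Kv\wedge Iv,\,v\wedge Jv+Kv\wedge Iv)=0$. A totally geodesic immersion then produces a nonzero $B\in\wedge^{2}T\Teich(S)$ with $Q(B,B)=0$; Theorem \ref{conp}(2) forces $B=C-\textbf{J}\circ C$, hence $\textbf{J}\circ B=-B$, whereas $B=v\wedge Jv+Kv\wedge Iv$ is visibly fixed by $\textbf{J}\circ$, so $B=0$, a contradiction (the Cayley case is analogous with a Cayley line). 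Your parenthetical alternative (kernel elements plus property (T)) is not an argument as stated. To salvage your route, replace ``choose $A$ with $Q^{X}(A,A)<0$'' by ``choose $A$ in the kernel of $Q^{X}$ of the above form'' and then invoke the zero-level characterization of Theorem \ref{conp}(2) together with the action of $\textbf{J}$ on $\wedge^{2}T\Teich(S)$; with that replacement the rest of your outline matches the paper.
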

\noindent The twist map $f$ with respect to $\rho$ means that $f(\gamma \circ Y)=\rho (\gamma)\circ f(Y)$ for all $\gamma \in \Gamma$.\newline

\subsection*{Plan of the paper.} In section \ref{np} we provide some necessary background and some basic properties for the operator $D=-2(\Delta-2)^{-1}$. In section \ref{coos} we establish the curvature operator formulas on different subspaces of $\wedge^{2}{T_{X}\Teich(S)}$ and show that the curvature operator is negative definite or non-positive definite on these different subspaces. In section \ref{4} we establish the curvature operator formula for $Q$ on $\wedge^{2}{T_{X}\Teich(S)}$ to prove theorem \ref{conp}. In section \ref{5} we finish the proof of theorem \ref{hmrt}.\newline

\subsection*{Acknowledgments.}
This paper is part of the author's thesis. The author is greatly indebted to his advisor, Jeffrey Brock, for his consistent encouragement and support. He would like to thank George Daskalopoulos for introducing this problem to the author and for many suggestions and discussions to help finish this article. He also would like to thank Zheng Huang, Georg Schumacher, Mike Wolf, and Scott Wolpert for useful conversations and suggestions.

\section{Notations and Preliminaries}\label{np} 

\subsection{Surfaces}
Let $S$ be a closed surface of genus $g\geq 2$, and $\textsl{M}_{-1}$ denote the space of Riemannian metrics with constant curvature $-1$, and $X=(S,\sigma|dz|^2)$ be a particular element of $\textsl{M}_{-1}$. $Diff_{0}$, which is the group of diffeomorphisms 
isotopic to the identity, acts by pull-back on $\textsl{M}_{-1}$. The Teichm\"uller space of $S$ $T_{g}$ is defined by the quotient space
\begin{equation}
\nonumber M_{-1}/Diff_{0}.  
\end{equation}
The Teichm\"uller space has a natural complex structure, and its holomorphic cotangent space $T_{X}^{*}T_{g}$ is identified with the $\textit{quadratic  differentials}$ $Q(X)={\varphi(z)dz^{2}}$ on $X$. The \textit{Weil-Petersson metric} is the Hermitian
metric on $T_{g}$ arising from the the \textit{Petersson scalar  product}
\begin{equation}
 <\varphi,\psi>= \int_S \frac{\varphi \cdot \overline{\psi}}{\sigma^{2}}dzd\overline{z} \nonumber
\end{equation}
via duality. We will concern ourselves primarily with its Riemannian part $g_{WP}$. Throughout this paper, we denote the Teichm\"uller space endowed with the Weil-Petersson metric by $\Teich(S)$.\\ 

Setting $D=-2(\Delta-2)^{-1}$, where $\Delta$ is the Beltrami-Laplace operator on $X$, we have $D^{-1}=-\frac{1}{2}(\Delta-2)$. The following property has been proved in a lot of literature; for completeness, we still state the proof here.
\begin{proposition} 
Let $D$ be the operator above. Then \\
(1) $D$ is self-adjoint.\\
(2) $D$ is positive. 
\end{proposition}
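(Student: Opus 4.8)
The plan is to reduce both statements to the spectral theory of the Laplace--Beltrami operator $\Delta$ on the closed hyperbolic surface $X$ together with a single integration by parts. First I would record the two standing facts about $\Delta$ (normalized, as here, to be the \emph{non-positive} hyperbolic Laplacian): with respect to the $L^2$-inner product $\langle f,h\rangle=\int_X f\,\overline{h}$ associated to the hyperbolic area form, $\Delta$ is self-adjoint because Green's identity on the closed manifold $X$ produces no boundary terms, and $\langle -\Delta f,f\rangle=\int_X|\nabla f|^2\ge 0$. Being elliptic on a compact manifold, $\Delta$ then has discrete spectrum, which we may write as $\{-\lambda_k\}_{k\ge 0}$ with $0=\lambda_0\le\lambda_1\le\cdots\to\infty$. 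In particular $2$ lies in the resolvent set of $\Delta$, so $\Delta-2$ is invertible, $D=-2(\Delta-2)^{-1}$ is a genuine bounded operator (equivalently $D^{-1}=-\tfrac12(\Delta-2)$ has trivial kernel), and by elliptic regularity $D$ maps $C^\infty(X)$ to itself.

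For (1), the inverse of a self-adjoint invertible operator is self-adjoint, and multiplication by the real scalar $-2$ preserves this. Concretely, given smooth $f,h$, set $u=Df$ and $v=Dh$, so that $f=\tfrac12(2-\Delta)u$ and $h=\tfrac12(2-\Delta)v$; then self-adjointness of $\Delta$ gives
\[
\langle Df,h\rangle=\langle u,h\rangle=\tfrac12\langle u,(2-\Delta)v\rangle=\tfrac12\langle (2-\Delta)u,v\rangle=\langle f,v\rangle=\langle f,Dh\rangle .
\]
For (2), the same substitution yields, with $u=Df$,
\[
\langle Df,f\rangle=\langle u,\tfrac12(2-\Delta)u\rangle=\|u\|_{L^2}^2+\tfrac12\int_X|\nabla u|^2\ \ge\ 0,
\]
which is strictly positive unless $u=0$, i.e.\ unless $f=0$ since $D$ is invertible; alternatively one simply notes that the eigenvalues of $D$ are $\tfrac{2}{2+\lambda_k}\in(0,1]$, all positive, so $D$ is in fact positive definite and bounded below by a positive constant.

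This proposition is essentially routine, and the only point that genuinely requires attention is the invertibility of $\Delta-2$: this is exactly where the sign normalization of the hyperbolic Laplacian and the compactness of $X$ enter, through the spectral bound $\lambda_k\ge 0$. Once that is in place, both self-adjointness and positivity drop out of one application of integration by parts, as above.
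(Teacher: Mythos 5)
Your proof is correct and follows essentially the same route as the paper: set $u=Df$, use self-adjointness of $\Delta$ on the closed surface for (1), and integrate by parts to get $\|u\|_{L^2}^2+\tfrac12\int_X|\nabla u|^2\ge 0$ for (2). The only difference is that you additionally justify the invertibility of $\Delta-2$ (hence the existence of $D$) via the non-positive spectrum of $\Delta$, a point the paper leaves implicit; this is a welcome but minor supplement rather than a different argument.
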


\begin{proof}[Proof of (1)] Let $f$ and $g$ be two real-valued smooth functions on $X$, and $u=Df$, $v=Dg$. Then
\begin{eqnarray*}
\int_S Df\cdot gdA&=&\int_S u\cdot(-\frac{1}{2}(\Delta-2)v)dA=-\frac{1}{2} \int_S u\cdot (\Delta-2)vdA \\  
&=&-\frac{1}{2}\int_S v\cdot (\Delta-2)udA=\int_S Dg\cdot fdA,
\end{eqnarray*}
where the equality in the second row follows from the fact that $\Delta$ is self-adjoint on closed surfaces. For the case that $f$ and $g$ are complex-valued, one can prove it through the real and imaginary parts by using the same argument.\\

Proof of (2): Let $f$ be a real-valued smooth functions on $X$, and $u=Df$. Then
\begin{eqnarray*}  
\int_S Df\cdot f dA &=&\int_S u\cdot(-\frac{1}{2}(\Delta-2)u)dA=-\frac{1}{2} \int_S (u \cdot (\Delta u)-2u^2) dA \\ 
&=&\frac{1}{2}( \int_S |\nabla u|^2+ 2u^2 dA )\geq 0, \nonumber
\end{eqnarray*}
where the equality in the second row follows from  the Stoke's Theorem. The last equality holds if and only if $u=0$. That is, $D$ is positive. For the case that $f$ is complex-valued, one can show it by arguing the real and imaginary parts.
\end{proof}

For the Green function of the operator $-2(\Delta-2)^{-1}$, we have
\begin{proposition} \label{psfg}
Let $D$ be the operator above. Then there exists a Green function $G(w,z)$ for $D$ satisfying:\\
(1) $G(w,z)$ is positive.\\
(2) $G(w,z)$ is symmetric, i.e, $G(w,z)=G(z,w)$.

\end{proposition}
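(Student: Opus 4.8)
The plan is to construct $G(w,z)$ explicitly as the Schwartz kernel of the bounded self-adjoint operator $D = -2(\Delta-2)^{-1}$ and to read off both positivity and symmetry from properties of $D$ already established in the previous proposition. Since $X$ is a closed hyperbolic surface, the operator $\Delta - 2$ is invertible on $L^2(X)$ (its spectrum lies in $(-\infty, -2]$, so $0$ is not an eigenvalue), and elliptic regularity gives that $D$ maps $L^2(X)$ into smooth functions; standard elliptic theory then produces a smooth integral kernel $G(w,z)$ off the diagonal with at worst a logarithmic singularity along $w=z$, so that $(Df)(w) = \int_S G(w,z) f(z)\, dA(z)$ for all smooth $f$. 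I would state this existence as a consequence of the general theory of Green's functions for invertible elliptic second-order operators on closed manifolds and not belabor it.

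First I would prove symmetry. This is essentially a restatement of part (1) of the preceding proposition: for smooth $f, g$ we have
\begin{equation}
\nonumber \int_S \int_S G(w,z) f(z) g(w)\, dA(z)\, dA(w) = \int_S (Df)(w)\, g(w)\, dA(w) = \int_S (Dg)(w)\, f(w)\, dA(w) = \int_S \int_S G(z,w) f(z) g(w)\, dA(z)\, dA(w).
\end{equation}
Since this holds for all smooth $f$ and $g$, and $G$ is continuous off the diagonal, we conclude $G(w,z) = G(z,w)$ pointwise for $w \neq z$ (and the relation extends to the diagonal in the appropriate distributional sense).

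Next I would prove positivity, i.e. $G(w,z) > 0$. The cleanest route is the maximum principle: fix $z_0 \in S$ and consider $u(w) = G(w, z_0)$, which away from $z_0$ satisfies $D^{-1} u = -\tfrac{1}{2}(\Delta - 2) u = 0$, that is $\Delta u = 2u$, while near $z_0$ it behaves like $-c \log|w - z_0|$ with $c > 0$ (so $u \to +\infty$ at $z_0$). If $u$ attained a nonpositive minimum at some interior point $w_1 \neq z_0$, then at $w_1$ we would have $\Delta u (w_1) \geq 0$ but $2 u(w_1) \leq 0$, forcing $u(w_1) = 0$ and, by the strong maximum principle applied to $\Delta u - 2u = 0$, forcing $u \equiv 0$ on the complement of $z_0$, contradicting the logarithmic blow-up. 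Hence $u > 0$ everywhere. Alternatively, one can use the probabilistic/heat-kernel representation $D = \int_0^\infty 2 e^{-2t} e^{t\Delta}\, dt$ together with the strict positivity of the heat kernel $e^{t\Delta}(w,z) > 0$ on a connected closed manifold, which gives $G(w,z) = \int_0^\infty 2 e^{-2t} p_t(w,z)\, dt > 0$ directly; I would likely present this second argument as it is shortest and also transparently re-proves symmetry since $p_t(w,z) = p_t(z,w)$.

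The main obstacle is not conceptual but expository: one must pin down in what sense the Green function exists and is a genuine function (it is, off the diagonal, because the dimension is $2$ and the singularity is only logarithmic, hence locally integrable), and make sure the test-function manipulations above are justified despite that singularity — this is handled by excising a small disk around the diagonal, integrating by parts on the complement, and checking the boundary terms vanish as the radius shrinks, exactly as in the classical construction of the Green's function for $\Delta$ on a surface. Given that the previous proposition already records self-adjointness and positivity of $D$ as an operator, the heat-kernel representation makes both claims almost immediate, so I would keep the write-up short and reference standard elliptic theory for the existence and regularity of $G$.
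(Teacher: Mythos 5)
Your proposal is correct, but it is worth noting that the paper does not actually prove this proposition at all: its ``proof'' is a pointer to the literature (Roelcke's spectral theory of automorphic forms and Wolpert's curvature paper), where the resolvent kernel of $\Delta-2$ on a hyperbolic surface is constructed by summing a positive kernel on the disk over the deck group. Your route is self-contained and somewhat more general: you obtain the kernel $G$ from standard elliptic theory for the invertible operator $\Delta-2$ on a closed manifold, get symmetry from self-adjointness of $D$ (or directly from $p_t(w,z)=p_t(z,w)$), and get positivity either by the strong minimum principle for $\Delta u=2u$ (valid since the zeroth-order coefficient has the right sign) or, most cleanly, from the subordination formula $D=2\int_0^\infty e^{-2t}e^{t\Delta}\,dt$ and strict positivity of the heat kernel on a connected closed manifold. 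The heat-kernel argument is the one I would keep, since it yields (1) and (2) simultaneously and avoids having to discuss the logarithmic singularity; just make sure the sign convention is stated explicitly ($\Delta\le 0$ in the spectral sense, which is the convention the paper uses in proving positivity of $D$), since with the opposite convention the integral representation and the spectral claim ``$\mathrm{spec}(\Delta-2)\subset(-\infty,-2]$'' would be false as written. What the paper's citation buys instead is the explicit uniformized construction of $G$ (as a Poincar\'e series of the free-space resolvent kernel), which gives finer information near the diagonal than you need here; for the purposes of Theorem 1.1 your softer argument suffices.
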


\begin{proof}
One can refer to \cite{Roelcke} and \cite{Wol86}.
\end{proof}

\textbf{The Riemannian tensor of the Weil-Petersson metric.} The curvature tensor is given by the following. Let $\mu_{\alpha},\mu_{\beta}$ be two elements in the tangent space at $X$, and 
\begin{eqnarray*}
g_{\alpha \overline{\beta}}=\int_X \mu_{\alpha} \cdot  \overline{\mu_{\beta}} dA, 
\end{eqnarray*}
where dA is the area element for X.  

Let us study the curvature tensor in these local coordinates. First of all, for the inverse of $(g_{i\overline{j}})$, we use the convention
\begin{eqnarray*}
g^{i\overline{j}} g_{k\overline{j}}=\delta_{ik}.
\end{eqnarray*}

The curvature tensor is given by
\begin{eqnarray*}
R_{i\overline{j}k\overline{l}}=\frac{\partial^2 g_{i\overline{j}}}{\partial t^{k}\partial \overline{t^{l}}}-g^{s\overline{t}}\frac{\partial g_{i\overline{t}}}{\partial t^{k}}\frac{\partial g_{s\overline{j}}}{\partial \overline{t^{l}}}.
\end{eqnarray*}

Since Ahlfors showed that the first derivatives of the metric tensor vanish at the base point $X$ in these coordinates, at $X$ we have
\begin{eqnarray}\label{cocf}
R_{i\overline{j}k\overline{l}}=\frac{\partial^2 g_{i\overline{j}}}{\partial t^{k}\partial \overline{t^{l}}}.
\end{eqnarray}

By the same argument in K\"ahler geometry we have
\begin{proposition}\label{bpfcoc}  
For any indices i, j, k, l, we have

\[(1) \quad R_{ij\overline{k}\overline{l}}=R_{\overline{i}\overline{j}kl}=0.\]

\[(2)\quad R_{i\overline{j}k\overline{l}}=-R_{i\overline{j}\overline{l}k}.\]

\[(3) \quad R_{i\overline{j}k\overline{l}}=R_{k\overline{j}i\overline{l}}.\]

\[(4) \quad R_{i\overline{j}k\overline{l}}=R_{i\overline{l}k\overline{j}}.\]

\end{proposition}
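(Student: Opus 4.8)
The plan is to establish Proposition~\ref{bpfcoc} exactly as one treats the curvature tensor of any K\"ahler manifold, resting on two facts already in hand: the Weil--Petersson metric is K\"ahler (\cite{Ahlfors}), and, again by Ahlfors, the first derivatives of $g_{i\overline j}$ vanish at $X$ in the chosen coordinates, so that at $X$ the curvature components are the plain Hessian $R_{i\overline j k\overline l}=\partial_{k}\partial_{\overline l}g_{i\overline j}$ of (\ref{cocf}). In addition I would use the generic symmetries of the Riemann tensor of any Riemannian metric, $R(Y,X,Z,W)=-R(X,Y,Z,W)=-R(X,Y,W,Z)=R(Z,W,X,Y)$, together with the fact that it is real-valued, so that complex-conjugating a component has the effect of interchanging its barred and unbarred indices.

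First I would dispatch (1). Since $\nabla$ is the Levi--Civita connection of a K\"ahler metric, torsion-freeness together with $\nabla\textbf{J}=0$ forces $\nabla$ to preserve the splitting of the complexified tangent space into its $(1,0)$ and $(0,1)$ parts; equivalently, the only Christoffel symbols that can be nonzero are the ``pure'' ones $\Gamma^{i}_{jk}$ and $\Gamma^{\overline i}_{\overline j\overline k}$. Hence $\nabla_{\partial/\partial t^{j}}\,(\partial/\partial\overline{t^{k}})=0$, and because the coordinate fields commute this gives $R(\partial/\partial t^{i},\partial/\partial t^{j})\,(\partial/\partial\overline{t^{k}})=0$; pairing against $\partial/\partial\overline{t^{l}}$ yields $R_{ij\overline k\overline l}=0$, and conjugating gives $R_{\overline i\overline j kl}=0$. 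Assertion (2), $R_{i\overline j k\overline l}=-R_{i\overline j\overline l k}$, is then just the antisymmetry of the Riemann tensor in its last two slots.

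The one place where a genuinely extra input is needed is (3) and (4), and there I would invoke that the K\"ahler form $\omega=\textbf{i}\,g_{i\overline j}\,dt^{i}\wedge d\overline{t^{j}}$ is closed. Splitting $d\omega=0$ into its $(2,1)$ and $(1,2)$ parts gives the first-order identities $\partial_{k}g_{i\overline j}=\partial_{i}g_{k\overline j}$ and $\partial_{\overline l}g_{i\overline j}=\partial_{\overline j}g_{i\overline l}$ on a neighborhood of $X$. Differentiating the first in $\overline{t^{l}}$, the second in $t^{k}$, and then evaluating at $X$ --- where $R_{i\overline j k\overline l}=\partial_{k}\partial_{\overline l}g_{i\overline j}$ and, after the obvious relabelings, $R_{k\overline j i\overline l}=\partial_{i}\partial_{\overline l}g_{k\overline j}$ and $R_{i\overline l k\overline j}=\partial_{k}\partial_{\overline j}g_{i\overline l}$ --- produces $R_{i\overline j k\overline l}=R_{k\overline j i\overline l}$ and $R_{i\overline j k\overline l}=R_{i\overline l k\overline j}$, which are (3) and (4).

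I do not expect any real obstacle here: the whole of it is standard K\"ahler linear algebra once Ahlfors' vanishing of the first derivatives of $g_{i\overline j}$ at $X$ is in place, which is exactly what collapses the full curvature expression to (\ref{cocf}) and lets (3) and (4) drop out of $d\omega=0$. The main thing to be careful about is convention bookkeeping: tracking which pair of indices of $R_{i\overline j k\overline l}$ is antisymmetric, matching the index notation to the operator form $g(R(\cdot,\cdot)\cdot,\cdot)$ in which the generic symmetries are stated, and applying the reality relation consistently when passing from the first half of (1) to the second.
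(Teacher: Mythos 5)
Your proof is correct, and it is a complete write-out of what the paper compresses into one line; the difference is in which standard K\"ahler fact carries the load for (3) and (4). The paper deduces the proposition from formula (\ref{cocf}) together with the \emph{first Bianchi identity}: once (1) is known (curvature of a K\"ahler metric is of type $(1,1)$), Bianchi gives $R_{i\overline{j}k\overline{l}}-R_{k\overline{j}i\overline{l}}+R_{ki\overline{j}\overline{l}}=0$, and the last term vanishes, yielding (3); (4) then follows by pair symmetry and reality. You instead never invoke Bianchi: you get (1) from the vanishing of the mixed Christoffel symbols (so $R(\partial_{t^{i}},\partial_{t^{j}})\partial_{\overline{t^{k}}}=0$, plus conjugation for the barred version), (2) from last-slot antisymmetry exactly as the paper would, and (3)--(4) by differentiating the first-order consequences of $d\omega=0$, namely $\partial_{k}g_{i\overline{j}}=\partial_{i}g_{k\overline{j}}$ and $\partial_{\overline{l}}g_{i\overline{j}}=\partial_{\overline{j}}g_{i\overline{l}}$, and evaluating at $X$ where (\ref{cocf}) reduces the curvature to the Hessian. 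Your route makes the role of the K\"ahler condition explicit at the level of the metric, and you correctly note the one point that needs care: the identities from $d\omega=0$ must be used on a neighborhood of $X$ (at $X$ itself they are vacuous since the first derivatives vanish), and only after differentiation are they evaluated at $X$. The paper's route is the quicker tensorial argument once (1) is in hand; both are standard and give the same conclusion, so the choice is a matter of taste and of which bookkeeping (Bianchi cyclic sum versus Hessian relabeling) one prefers to track.
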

 
\begin{proof}
These follow from formula (\ref{cocf}) and the first Bianchi identity (one can refer to \cite{Jost}).
\end{proof}

Now let us state Wolpert's curvature formula, which is crucial in the proof of theorem \ref{conp}.

\begin{theorem}\label{cfow}(see \cite{Wol86}) 
The curvature tensor satisfies

\[R_{i\overline{j}k\overline{l}}=\int_{X} D(\mu_{i}\mu_{\overline{j}})\cdot (\mu_{k}\mu_{\overline{l}})dA+\int_{X} D(\mu_{i}\mu_{\overline{l}})\cdot (\mu_{k}\mu_{\overline{j}})dA.\]

\end{theorem}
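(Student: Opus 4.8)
The plan is to derive the formula by computing the second mixed derivative of the metric tensor along an explicit deformation of the complex structure, and to identify the operator $D=-2(\Delta-2)^{-1}$ as the Green's operator of the linearized Gauss curvature equation. By formula (\ref{cocf}) it suffices to compute $\partial_{t^k}\partial_{\overline{t^l}} g_{i\overline{j}}$ at $X$, since Ahlfors's choice of coordinates kills the first derivatives of the metric. I would fix the basis $\{\mu_\alpha\}$ of harmonic Beltrami differentials at $X$ and consider the family of complex structures on $S$ whose Beltrami coefficient, relative to $X$, is $\nu^t=\sum_\alpha t_\alpha\mu_\alpha$ for $t$ small. Writing $X_t$ for the resulting marked hyperbolic surface, I denote by $\sigma(t)\,|dz+\nu^t d\overline{z}|^2$ its hyperbolic metric pulled back to $X$, and by $\mu_\alpha^t$ the \emph{harmonic} Beltrami differential on $X_t$ representing $\partial/\partial t_\alpha$. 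Then $g_{i\overline{j}}(t)=\int_X \mu_i^t\,\overline{\mu_j^t}\, dA_t$, so the whole computation reduces to the first- and second-order variations of the three objects $\sigma(t)$, $\mu_i^t$, and $dA_t$.

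The heart of the argument is the variation of the hyperbolic conformal factor. The condition that $\sigma(t)$ realizes curvature $-1$ is the Gauss equation $\Delta_{\nu^t}\log\sigma(t)=2\sigma(t)$, where $\Delta_{\nu^t}$ is the Euclidean Laplacian of the conformal structure $|dz+\nu^t d\overline{z}|^2$. Linearizing at $t=0$, where $\nu^0=0$ and $\Delta_{\nu^0}$ specializes to the base hyperbolic Laplacian $\Delta$, governs the first-order deformation of $\sigma(t)$; the information needed for the curvature sits at second, \emph{mixed} order. Carrying the expansion to order $\partial_{t_\alpha}\partial_{\overline{t_\beta}}$ produces an inhomogeneous elliptic equation of the shape
\begin{equation}\nonumber
(\Delta-2)\,e_{\alpha\overline{\beta}}=-2\,\mu_\alpha\overline{\mu_\beta},
\end{equation}
whose unique solution is $e_{\alpha\overline{\beta}}=D(\mu_\alpha\overline{\mu_\beta})$. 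This is precisely the point at which $D$, and hence its positive symmetric Green function from Proposition \ref{psfg}, enters the curvature; note that $\mu_\alpha\overline{\mu_\beta}$ is a genuine function on $X$ since $\mu_\alpha$ has type $(-1,1)$.

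Finally I would assemble the second derivative. Differentiating $g_{i\overline{j}}(t)=\int_X \mu_i^t\overline{\mu_j^t}\,dA_t$ once in $t^k$ and once in $\overline{t^l}$ and evaluating at $X$, the terms involving a single derivative of the area form or of a harmonic representative combine with the vanishing first variations and cancel, while the surviving contributions are the second variation of the conformal factor paired against $\mu_i\overline{\mu_j}$, together with the cross terms coming from $\partial_{t^k}\mu_i^t$ and $\partial_{\overline{t^l}}\overline{\mu_j^t}$. Using that the variation of the harmonic representative is again governed by $(\Delta-2)^{-1}$, and invoking the self-adjointness of $D$ to transfer the operator onto one factor, these reorganize into
\begin{equation}\nonumber
R_{i\overline{j}k\overline{l}}=\int_X D(\mu_i\overline{\mu_j})\,(\mu_k\overline{\mu_l})\,dA+\int_X D(\mu_i\overline{\mu_l})\,(\mu_k\overline{\mu_j})\,dA,
\end{equation}
the two summands corresponding to the two ways of pairing the holomorphic deformation index $k$ with the antiholomorphic indices $\overline{j}$ and $\overline{l}$.

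The step I expect to be the main obstacle is the rigorous bookkeeping of the \emph{moving} complex structure: because harmonicity of $\mu_i^t$ is defined with respect to the varying metric on $X_t$, computing $\partial_{t^k}\mu_i^t$ requires differentiating the orthogonal projection onto harmonic Beltrami differentials, and one must verify that every non-resolvent contribution telescopes against Ahlfors's first-order vanishing, leaving exactly the two $D$-integrals with the correct coefficient and sign. An alternative that sidesteps part of this is Wolf's harmonic-maps parametrization \cite{Wolf89}, in which the family is described through the Hopf differential and the same operator $(\Delta-2)^{-1}$ reappears; either route isolates the positive self-adjoint operator $D$ as the essential analytic ingredient.
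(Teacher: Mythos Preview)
The paper does not prove this theorem at all: it is stated with the attribution ``(see \cite{Wol86})'' and is used as a black-box input, with the remark that Wolf \cite{Wolf89} gave an alternative derivation via harmonic maps. So there is no proof in the paper to compare your proposal against.

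Your outline is a reasonable sketch of how the original arguments go---linearize the curvature condition on the conformal factor, identify $D=-2(\Delta-2)^{-1}$ as the resolvent arising from the second mixed variation, and collect the surviving terms---and you correctly flag the delicate step (differentiating the moving harmonic projection and checking the cancellations). If you want to turn this into an actual proof rather than a plan, you would need to carry out that bookkeeping in full, for which the cleanest published sources are precisely Wolpert \cite{Wol86} and Wolf \cite{Wolf89}; the present paper simply quotes the result.
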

 
\begin{definition}\label{ewcf}
Let $\mu_{*}$ be elements $\in T_{X}\Teich(S)$. Set
\begin{eqnarray*}
(i\overline{j},k\overline{l}):=\int_{X} D(\mu_{i}\mu_{\overline{j}})\cdot (\mu_{k}\mu_{\overline{l}})dA.
\end{eqnarray*}
\end{definition}

We close this section by rewriting theorem \ref{cfow} as follows.
\begin{theorem}\label{cfown}
\[R_{i\overline{j}k\overline{l}}=(i\overline{j},k\overline{l})+(i\overline{l},k\overline{j}).\]
\end{theorem}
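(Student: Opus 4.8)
The final statement to prove is Theorem~\ref{cfown}, which merely recasts Wolpert's curvature formula (Theorem~\ref{cfow}) in the shorthand notation of Definition~\ref{ewcf}. So the ``proof'' is essentially a matter of unwinding notation, and the plan reflects that.

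\medskip

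The plan is to simply substitute Definition~\ref{ewcf} into the statement of Theorem~\ref{cfow}. First I would recall that Theorem~\ref{cfow} asserts
\[
R_{i\overline{j}k\overline{l}}=\int_{X} D(\mu_{i}\mu_{\overline{j}})\cdot (\mu_{k}\mu_{\overline{l}})\,dA+\int_{X} D(\mu_{i}\mu_{\overline{l}})\cdot (\mu_{k}\mu_{\overline{j}})\,dA.
\]
Then I would observe that, by Definition~\ref{ewcf} with the index choice $(i,\overline{j},k,\overline{l})$, the first integral is exactly $(i\overline{j},k\overline{l})$; and applying the same definition with $\overline{l}$ and $\overline{j}$ interchanged in the roles of the second and fourth slots, the second integral is exactly $(i\overline{l},k\overline{j})$. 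Adding these gives $R_{i\overline{j}k\overline{l}}=(i\overline{j},k\overline{l})+(i\overline{l},k\overline{j})$, as claimed.

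\medskip

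There is essentially no obstacle here: the only thing to be slightly careful about is that the symbol $(i\overline{j},k\overline{l})$ is well-defined, i.e. that the right-hand side of Wolpert's formula is genuinely expressible through this bracket — which is immediate since $D$ is a linear operator acting on the product functions $\mu_{i}\mu_{\overline{j}}$ (these are honest functions on $X$ because $\mu_i$ is a $(-1,1)$-form and $\mu_{\overline{j}}=\overline{\mu_j}$ is a $(1,-1)$-form, so the product is a function, and likewise $\mu_k\mu_{\overline{l}}$), so all the integrals in Definition~\ref{ewcf} make sense. No new inequality, no analysis, and no appeal to the symmetry of the Green function is needed at this stage; those tools enter only later, when this rewritten formula is combined with the Cauchy--Schwarz inequality and Proposition~\ref{psfg} to analyze the curvature operator $Q$. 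Thus the proof is a one-line substitution.
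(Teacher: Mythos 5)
Your proposal is correct and matches the paper exactly: the paper offers no separate argument for Theorem \ref{cfown}, presenting it simply as a rewriting of Wolpert's formula (Theorem \ref{cfow}) using the bracket notation of Definition \ref{ewcf}, which is precisely your one-line substitution.
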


\section{Curvature operator on subspaces of $\wedge{T_{X}^{2}\Teich(S)}$}\label{coos}

Before we study the curvature operator of $\Teich(S)$, let us set some neccessary notations. Let $U$ be a neighborhood of $X$ and $(t_{1},t_{2},...,t_{3g-3})$ be a local holomorphic coordinate on $U$, where $t_{i}=x_{i}+\textbf{i}y_{i} (1\leq i \leq 3g-3)$. Then $(x_{1},x_{2},...,x_{3g-3},y_{1},y_{2},...,y_{3g-3})$ is a real smooth coordinate in $U$. Furthermore, we have
\begin{eqnarray*}
\frac{\partial}{\partial x_{i}} =\frac{\partial}{\partial t_{i}}+\frac{\partial}{\partial \overline{t_{i}}}, \quad \ \ \frac{\partial}{\partial y_{i}} =\textbf{i}(\frac{\partial}{\partial t_{i}}-\frac{\partial}{\partial \overline{t_{i}}}).
\end{eqnarray*}

Let $T\Teich(S)$ be the real tangent bundle of $\Teich(S)$ and $\wedge^{2}T\Teich(S)$ be the exterior wedge product of $T\Teich(S)$ and itself. For any $X \in U$, we have

\[T_{X}\Teich(S)=Span\{\frac{\partial}{\partial x_{i}}(X),\frac{\partial}{\partial y_{j}}(X)\}_{1\leq i,j\leq3g-3}\] 

and

\[\wedge^{2}T\Teich(S)=Span\{\frac{\partial}{\partial x_{i}}\wedge \frac{\partial}{\partial x_{j}}, 
\frac{\partial}{\partial x_{k}}\wedge \frac{\partial}{\partial y_{l}}, \frac{\partial}{\partial y_{m}}\wedge \frac{\partial}{\partial y_{n}}\}.\]

Set
\begin{eqnarray*}
\wedge^{2}T_{X}^1\Teich(S):= Span\{\frac{\partial}{\partial x_{i}}\wedge \frac{\partial}{\partial x_{j}}\}, \\
\wedge^{2}T_{X}^2\Teich(S):= Span\{\frac{\partial}{\partial x_{k}}\wedge \frac{\partial}{\partial y_{l}}\}, \\
\wedge^{2}T_{X}^3\Teich(S):= Span\{\frac{\partial}{\partial y_{m}}\wedge \frac{\partial}{\partial y_{n}}\}.
\end{eqnarray*}

Hence, 
\begin{eqnarray*}
\wedge^{2}T_{X}\Teich(S)=Span\{\wedge^{2}T_{X}^1\Teich(S),\wedge^{2}T_{X}^2\Teich(S),\wedge^{2}T_{X}^3\Teich(S)\}.
\end{eqnarray*}

\subsection{The curvature operator on $\wedge^{2}T_{X}^1\Teich(S)$}
Let $\sum_{i j}{a_{ij}\frac{\partial}{\partial x_{i}}\wedge \frac{\partial}{\partial x_{j}}}$ be an element in $\wedge^{2}T_{X}^1\Teich(S)$, where $a_{ij}$ are real. Set
\begin{eqnarray*}
F(z,w)=\sum_{i,j=1}^{3g-3}{a_{ij}\mu_{i}(w)\cdot \overline{\mu_{j}(z)}}.
\end{eqnarray*}
The following proposition is influenced by theorem 4.1 in \cite{LSY}.
\begin{proposition} \label{coor}
Let $Q$ be the curvature operator and $D=-2(\Delta-2)^{-1}$, where $\Delta$ is the Beltrami-Laplace operator on $X$. $G$ is the Green function of $D$, and $\sum_{ij}a_{ij}\frac{\partial}{\partial x_{i}}\wedge \frac{\partial}{\partial x_{j}}$ is an element in $\wedge^{2}T_{X}^1\Teich(S)$, where $a_{ij}$ are real. Then we have
\begin{eqnarray*}
&&Q(\sum_{ij}a_{ij}\frac{\partial}{\partial x_{i}}\wedge \frac{\partial}{\partial x_{j}},\sum_{ij}a_{ij}\frac{\partial}{\partial x_{i}}\wedge \frac{\partial}{\partial x_{j}})\\
&=& \int_{X}{D(F(z,z)-\overline{F(z,z)})(F(z,z)-\overline{F(z,z)})}dA(z)\\
&-&2\cdot \int_{X\times X}G(z,w)|F(z,w))|^2dA(w)dA(z)\\
&+&2 \cdot \Re\{\int_{X\times X}G(z,w)F(z,w)F(w,z)dA(w)dA(z)\},
\end{eqnarray*}
where $F(z,w)=\sum_{i,j=1}^{3g-3}{a_{ij}\mu_{i}(w)\cdot \overline{\mu_{j}(z)}}$.
\end{proposition}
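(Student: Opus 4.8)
The plan is to expand the curvature operator $Q$ on the bivector $A=\sum_{ij}a_{ij}\frac{\partial}{\partial x_i}\wedge\frac{\partial}{\partial x_j}$ directly from its definition $Q(A,A)=\sum_{ij,kl}a_{ij}a_{kl}R(\frac{\partial}{\partial x_i},\frac{\partial}{\partial x_j},\frac{\partial}{\partial x_k},\frac{\partial}{\partial x_l})$, then translate everything into the complex frame using $\frac{\partial}{\partial x_i}=\frac{\partial}{\partial t_i}+\frac{\partial}{\partial\overline{t_i}}$. Since $R$ vanishes whenever the number of holomorphic and antiholomorphic slots is not balanced (Proposition \ref{bpfcoc}(1)) and $R_{i\overline j k\overline l}$ is antisymmetric in the first two and last two slots, the sixteen-term expansion of each $R(\frac{\partial}{\partial x_i},\frac{\partial}{\partial x_j},\frac{\partial}{\partial x_k},\frac{\partial}{\partial x_l})$ collapses to a manageable combination of the tensors $R_{i\overline j k\overline l}$ and their conjugates. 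Using the symmetries in Proposition \ref{bpfcoc}(2)--(4), the terms should group into pieces of the form $\sum a_{ij}a_{kl}(R_{i\overline j k\overline l}-R_{i\overline j\overline l k}-\dots)$; I would carefully bookkeep these to get a clean intermediate expression before invoking Wolpert's formula.

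Next I would substitute Wolpert's formula (Theorem \ref{cfow}, equivalently Theorem \ref{cfown}) $R_{i\overline j k\overline l}=(i\overline j,k\overline l)+(i\overline l,k\overline j)$ into that intermediate expression. Here is where the auxiliary functions enter: writing $F(z,w)=\sum_{i,j}a_{ij}\mu_i(w)\overline{\mu_j(z)}$, the quantities $\sum a_{ij}a_{kl}\int_X D(\mu_i\mu_{\overline j})(\mu_k\mu_{\overline l})\,dA$ become, after pulling the sums inside the integral, expressions like $\int_X D(F(z,z))F(z,z)\,dA$ or, using the Green function representation $D(h)(z)=\int_X G(z,w)h(w)\,dA(w)$, double integrals $\int_{X\times X}G(z,w)F(z,w)F(w,z)\,dA\,dA$ and $\int_{X\times X}G(z,w)|F(z,w)|^2\,dA\,dA$ depending on how the indices $j$ and $l$ are paired with the $z$ and $w$ variables. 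The term $D(F(z,z)-\overline{F(z,z)})(F(z,z)-\overline{F(z,z)})$ will come from the "diagonal" contributions where both quadratic differentials are evaluated with coincident arguments, and the reality of $A$ (the $a_{ij}$ are real) is what lets me combine the holomorphic and antiholomorphic diagonal pieces into the single real expression $F(z,z)-\overline{F(z,z)}$; the factor $\Re\{\cdot\}$ on the last term similarly reflects combining a term with its complex conjugate.

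The main obstacle I expect is the bookkeeping: correctly tracking which of the two Wolpert summands $(i\overline j,k\overline l)$ versus $(i\overline l,k\overline j)$ gets contracted against which product of $a$'s, and under which relabeling of summation indices, so that the $G(z,w)|F(z,w)|^2$ term acquires exactly the coefficient $-2$ and the $\Re\{G(z,w)F(z,w)F(w,z)\}$ term the coefficient $+2$. A subtlety is that $F(z,w)$ is not symmetric in its arguments (it is conjugate-linear in $z$ and linear in $w$), so $F(z,w)$, $F(w,z)$, $\overline{F(z,w)}$, $\overline{F(w,z)}$, and $|F(z,w)|^2$ are genuinely different, and I must use $G(z,w)=G(w,z)$ (Proposition \ref{psfg}(2)) to symmetrize the double integrals and identify which combinations actually survive. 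I would handle this by organizing the computation around the antisymmetrization $a_{ij}\mapsto a_{ij}-a_{ji}$ forced by the wedge product, which effectively replaces $F(z,w)$ by $F(z,w)-F(w,z)$ in certain places, and by systematically using that $\int_X D(g)h\,dA=\int_X g\,D(h)\,dA$ (self-adjointness) to move $D$ onto whichever factor is convenient. Once the three terms are isolated with the right constants, the proof is complete; the positivity assertions of Proposition \ref{coor}'s role in the larger argument are deferred to later sections and are not part of this statement.
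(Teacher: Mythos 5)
Your plan follows the paper's proof essentially step for step: expand $Q(A,A)$ in the complex frame via $\frac{\partial}{\partial x_i}=\frac{\partial}{\partial t_i}+\frac{\partial}{\partial \overline{t_i}}$, kill the unbalanced terms and reorganize with Proposition \ref{bpfcoc}, substitute Wolpert's formula (Theorem \ref{cfown}), and convert the three resulting groups of pairings into the diagonal $D$-term, the $G(z,w)|F(z,w)|^2$ term, and the $\Re\{G(z,w)F(z,w)F(w,z)\}$ term via the Green-function representation of $D$ and the symmetry $G(z,w)=G(w,z)$ -- exactly the paper's route. One cosmetic caution: antisymmetrizing the $a_{ij}$ replaces $F(z,w)$ by $F(z,w)-\overline{F(w,z)}$ (which on the diagonal is $F(z,z)-\overline{F(z,z)}$), not by $F(z,w)-F(w,z)$, but this does not affect the structure of your argument.
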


\begin{proof} Since $\frac{\partial}{\partial x_{i}} =\frac{\partial}{\partial t_{i}}+\frac{\partial}{\partial \overline{t_{i}}}$, from proposition \ref{bpfcoc},
\begin{eqnarray*}
&& Q(\sum_{ij}a_{ij}\frac{\partial}{\partial x_{i}}\wedge \frac{\partial}{\partial x_{j}},\sum_{ij}a_{ij}\frac{\partial}{\partial x_{i}}\wedge \frac{\partial}{\partial x_{j}})\\
&=& \sum_{i,j,k,l}a_{ij}a_{kl}(R_{i\overline{j}k\overline{l}}+R_{i\overline{j}\overline{k}l}+R_{\overline{i}jk\overline{l}}+R_{\overline{i}j\overline{k}l})\\
&=& \sum_{i,j,k,l}a_{ij}a_{kl}(R_{i\overline{j}k\overline{l}}-R_{i\overline{j}l\overline{k}}-R_{j\overline{i}k\overline{l}}+R_{j\overline{i}l\overline{k}})\\
&=& \sum_{i,j,k,l}a_{ij}a_{kl}((i\overline{j},k\overline{l})+(i\overline{l},k\overline{j})-(i\overline{j},l\overline{k})-(i\overline{k},l\overline{j})\\
&-&(j\overline{i},k\overline{l})-(j\overline{l},k\overline{i})+(j\overline{i},l\overline{k})+(j\overline{k},l\overline{i})) \ \ (\text{by theorem 2.6})\\
&=& \sum_{i,j,k,l}a_{ij}a_{kl}(i\overline{j}-j\overline{i},k\overline{l}-l\overline{k})\\
&+& \sum_{i,j,k,l}a_{ij}a_{kl}((i\overline{l},k\overline{j})+(l\overline{i},j\overline{k}))\\
&-& \sum_{i,j,k,l}a_{ij}a_{kl}((i\overline{k},l\overline{j})+(j\overline{l},k\overline{i})).
\end{eqnarray*} 

For the first term, from definition \ref{ewcf},
\begin{eqnarray*}
&& \sum_{i,j,k,l}a_{ij}a_{kl}(i\overline{j}-j\overline{i},k\overline{l}-l\overline{k})\\
&=& \int_{X}D(\sum_{ij}a_{ij}\mu_{i}\overline{\mu_{j}} -\sum_{ij}a_{ij}\mu_{j}\overline{\mu_{i}})
(\sum_{ij}a_{ij}\mu_{i}\overline{\mu_{j}}-\sum_{ij}a_{ij}\mu_{j}\overline{\mu_{i}})dA(z)\\
&=& \int_{X}{D(F(z,z)-\overline{F(z,z)})(F(z,z)-\overline{F(z,z)})}dA(z).
\end{eqnarray*}

For the second term, after applying the Green function $G$ we have
\begin{small}
\begin{eqnarray*}
&& \sum_{i,l}a_{ij}a_{kl}((i\overline{l},k\overline{j})+(l\overline{i},j\overline{k}))=2 \cdot \Re\{\sum_{i,l}a_{ij}a_{kl}((i\overline{l},k\overline{j})\}\\
&=& 2\cdot \Re\{\int_{X} D(\sum_{i}a_{ij}\mu_{i}\overline{\mu_{l}})(\sum_{k}a_{kl}\mu_{k}\overline{\mu_{j}})dA(z)\}\\
&=& 2\cdot \Re\{\int_{X}\int_{X}G(w,z)\sum_{i}a_{ij}\mu_{i}(w)\overline{\mu_{l}(w)}(\sum_{k}a_{kl}\mu_{k}(z)\overline{\mu_{j}}(z))dA(z)dA(w)\}.
\end{eqnarray*}
\end{small}
From the definition of $F(z,w)$,
\begin{eqnarray*}
&& \sum_{i,j,k,l}a_{ij}a_{kl}((i\overline{l},k\overline{j})+(l\overline{i},j\overline{k})) \\
&=& 2\cdot \Re\{\int_{X\times X}G(z,w)F(z,w)F(w,z)dA(w)dA(z)\}.
\end{eqnarray*}

For the last term, we use an argument similar to that for the second term,
\begin{small}
\begin{eqnarray*}
&& \sum_{i,k}a_{ij}a_{kl}((i\overline{k},l\overline{j})+(k\overline{i},j\overline{l}))=2 \cdot \Re\{\sum_{i,k}a_{ij}a_{kl}((i\overline{k},l\overline{j})\}\\
&=& 2\cdot \Re\{\int_{X} D(\sum_{i}a_{ij}\mu_{i}\overline{\sum_{k}a_{kl}\mu_{k}})(\mu_{l}\overline{\mu_{j}})dA(z)\}\\
&=& 2\cdot \Re\{\int_{X}\int_{X}G(w,z)\sum_{i}a_{ij}\mu_{i}(w)\overline{\sum_{k}a_{kl}\mu_{k}(w)}(\mu_{l}(z)\overline{\mu_{j}}(z))dA(z)dA(w)\}
\end{eqnarray*}
\end{small}
From the definition of $F(z,w)$,
\begin{eqnarray*}
&& \sum_{i,j,k,l}a_{ij}a_{kl}((i\overline{k},l\overline{j})+(k\overline{i},j\overline{l})) \\
&=& 2\cdot \Re\{\int_{X\times X}G(z,w)F(z,w)\overline{F(z,w)}dA(w)dA(z)\}\\
&=& 2\cdot \int_{X\times X}G(z,w)|F(z,w)|^2dA(w)dA(z).
\end{eqnarray*}
The conclusion follows from combining the three terms above.
\end{proof}

Using the Green function's positivity and symmetry,
\begin{theorem} \label{coorn}
Under the same conditions in proposition \ref{coor}, $Q$ is negative definite on $\wedge^{2}T_{X}^1\Teich(S)$.
\end{theorem}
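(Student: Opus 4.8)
The plan is to read off the sign of $Q$ directly from the formula in Proposition \ref{coor}, after putting the coefficients into a convenient normal form. Since $\frac{\partial}{\partial x_i}\wedge\frac{\partial}{\partial x_j}=-\frac{\partial}{\partial x_j}\wedge\frac{\partial}{\partial x_i}$, both the element $A=\sum_{ij}a_{ij}\frac{\partial}{\partial x_i}\wedge\frac{\partial}{\partial x_j}$ and the value $Q(A,A)$ depend only on the skew part $\tfrac12(a_{ij}-a_{ji})$ of $(a_{ij})$; so I may assume from the outset that $a_{ij}=-a_{ji}$, with Proposition \ref{coor} still applying verbatim to these coefficients. The virtue of this normalization is a conjugation identity for $F$: relabeling the summation indices in $F(z,w)=\sum_{ij}a_{ij}\mu_i(w)\overline{\mu_j(z)}$ and using $a_{ij}=-a_{ji}$ (the $a_{ij}$ being real) gives $\overline{F(z,w)}=-F(w,z)$. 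In particular $F(z,z)$ is purely imaginary, say $F(z,z)=\textbf{i}\psi(z)$ with $\psi$ real, so that $F(z,z)-\overline{F(z,z)}=2\textbf{i}\psi$; and $|F(z,w)|^2=F(z,w)\overline{F(z,w)}=-F(z,w)F(w,z)$.

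Substituting these into Proposition \ref{coor}, the first term becomes $\int_X D(2\textbf{i}\psi)(2\textbf{i}\psi)\,dA=-4\int_X D(\psi)\,\psi\,dA$, and the second and third terms reinforce one another:
\[
-2\int_{X\times X}G|F(z,w)|^2+2\,\Re\{\int_{X\times X}G\,F(z,w)F(w,z)\}=-4\int_{X\times X}G(z,w)\,|F(z,w)|^2\,dA(w)\,dA(z).
\]
Hence $Q(A,A)=-4\int_X D(\psi)\psi\,dA-4\int_{X\times X}G(z,w)|F(z,w)|^2\,dA(w)\,dA(z)$, and by the positivity of $D$ and of the Green function $G$ (Proposition \ref{psfg}) both integrals are nonnegative, so $Q(A,A)\le 0$.

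For definiteness, suppose $Q(A,A)=0$. Then both integrals vanish, and since $G>0$ the second one forces $F(z,w)\equiv 0$ on $X\times X$. Because $\{\mu_i\}_{i=1}^{3g-3}$ is a basis of $T_X\Teich(S)$, the $\mu_i$ are linearly independent and so are the $\overline{\mu_j}$, hence the products $\{\mu_i(w)\overline{\mu_j(z)}\}_{i,j}$ are linearly independent on $X\times X$; therefore $\sum_{ij}a_{ij}\mu_i(w)\overline{\mu_j(z)}\equiv 0$ forces all $a_{ij}=0$, i.e.\ $A=0$. This gives $Q(A,A)<0$ for every nonzero $A\in\wedge^{2}T_{X}^1\Teich(S)$, which is the assertion. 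The two points that need a little care are the legitimacy of the antisymmetric normalization — immediate from the skew-symmetry of $\wedge^{2}$ — and the bookkeeping behind $\overline{F(z,w)}=-F(w,z)$, which is precisely what converts the cross term in Proposition \ref{coor} from an error term (controllable in general only via the Cauchy--Schwarz inequality and the symmetry of $G$) into a second copy of the manifestly signed $|F|^2$-integral; beyond this I do not anticipate any real obstacle.
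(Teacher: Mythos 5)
Your proof is correct, and it takes a genuinely different route from the paper at the decisive step. The paper applies Proposition \ref{coor} with the given coefficients, controls the cross term $2\Re\{\int G\,F(z,w)F(w,z)\}$ by the Cauchy--Schwarz inequality together with the positivity \emph{and symmetry} of the Green function $G$, and then extracts the definiteness from the equality case of Cauchy--Schwarz: equality forces $F(z,w)=k\,\overline{F(w,z)}$ with $k=1$, hence $a_{ij}=a_{ji}$ and so the wedge vanishes. You instead observe that the wedge element and $Q(A,A)$ depend only on the skew part of $(a_{ij})$, normalize to $a_{ij}=-a_{ji}$, and deduce the pointwise identity $\overline{F(z,w)}=-F(w,z)$; this makes $F(z,z)$ purely imaginary and turns the cross term into exactly $-2\int G|F|^{2}$, so no Cauchy--Schwarz (and in fact no use of the symmetry $G(z,w)=G(w,z)$) is needed, and the equality case is immediate: $G>0$ forces $F\equiv 0$, hence all skew coefficients vanish and $A=0$. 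What your normalization buys is a shorter, sharper argument with a cleaner equality analysis for this subspace; what the paper's Cauchy--Schwarz template buys is uniformity, since the same mechanism is reused verbatim on $\wedge^{2}T_{X}^{2}\Teich(S)$ and on $Span\{\wedge^{2}T_{X}^{1},\wedge^{2}T_{X}^{2}\}$ (Theorems \ref{cooxyn} and \ref{cooxymn}), where no analogous coefficient normalization exists (there is no relation between $b_{ij}$ and $b_{ji}$ for $\frac{\partial}{\partial x_{i}}\wedge\frac{\partial}{\partial y_{j}}$) and the zero level set is genuinely nontrivial, so your trick is special to $\wedge^{2}T_{X}^{1}$ and $\wedge^{2}T_{X}^{3}$. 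Your final linear-independence step (that $\sum_{ij}a_{ij}\mu_{i}(w)\overline{\mu_{j}(z)}\equiv 0$ forces $a_{ij}=0$) is at the same level of detail as the paper's own use of the basis property, so no gap there.
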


\begin{proof} By proposition \ref{coor} we have
\begin{eqnarray*}
&&Q(\sum_{ij}a_{ij}\frac{\partial}{\partial x_{i}}\wedge \frac{\partial}{\partial x_{j}},\sum_{ij}a_{ij}\frac{\partial}{\partial x_{i}}\wedge \frac{\partial}{\partial x_{j}})\\
&=&\int_{X}{D(F(z,z)-\overline{F(z,z)})(F(z,z)-\overline{F(z,z)})}dA(z) \\
&-&2\cdot (\int_{X\times X}G(z,w)|F(z,w))|^2dA(w)dA(z)\\
&-&\Re\{\int_{X\times X}G(z,w)F(z,w)F(w,z)dA(w)dA(z)\}).
\end{eqnarray*}
For the first term, since $F(z,z)-\overline{F(z,z)}=2\textbf{i}\Im \{F(z,z)\}$, by the positivity of the operator $D$,
\begin{eqnarray*}
&& \int_{X}{D(F(z,z)-\overline{F(z,z)})(F(z,z)-\overline{F(z,z)})}dA(z)\\
&=&-4\cdot \int_{X}D({\Im \{F(z,z)\})(\Im \{F(z,z)\})}dA(z) \leq 0.
\end{eqnarray*}
 
For the second term, using the Cauchy-Schwarz inequality, 
\begin{eqnarray*}
  &&|\int_{X\times X}G(z,w)F(z,w)F(w,z)dA(w)dA(z)| \\
 &\leq& \int_{X\times X}|G(z,w)F(z,w)F(w,z)|dA(w)dA(z)\\
 &\leq& \sqrt{ \int_{X\times X}|G(z,w)||F(z,w)|^2dA(w)dA(z)}\\
 &\times& \sqrt{ \int_{X\times X}|G(z,w)||F(w,z)|^2dA(w)dA(z)}\\
 &=&\int_{X\times X}G(z,w)|F(z,w)|^2dA(w)dA(z),
\end{eqnarray*}
since $G$ is positive and symmetric (see proposition \ref{psfg}).

Combining these three terms, we get that $Q$ is non-positive definite on $\wedge^{2}T_{X}^1\Teich(S)$.

Furthermore, equality holds precisely when
\begin{eqnarray*}
Q(\sum_{ij}a_{ij}\frac{\partial}{\partial x_{i}}\wedge \frac{\partial}{\partial x_{j}},
\sum_{ij}a_{ij}\frac{\partial}{\partial x_{i}}\wedge \frac{\partial}{\partial x_{j}})=0;
\end{eqnarray*}
that is, there exists a constant complex number $k$ such that both of the following hold:
\begin{eqnarray*}
\begin{cases}
F(z,z)=\overline{F(z,z)},\\
F(z,w)=k\cdot \overline{F(w,z)}.
\end{cases}
\end{eqnarray*}
If we let $z=w$, we get $k=1$. Hence, the last equation is equivalent to
\begin{eqnarray*}
\sum_{ij}(a_{ij}-a_{ji})\mu_{i}(w)\overline{\mu_{j}}(z)=0.
\end{eqnarray*}
Since $\{\mu_{i}\}_{i\geq 1}$ is a basis,
\begin{eqnarray*}
a_{ij}=a_{ji}.
\end{eqnarray*}
This means $\sum_{ij}a_{ij}\frac{\partial}{\partial x_{i}}\wedge \frac{\partial}{\partial x_{j}}=0$. That is, $Q$ is negative definite on $\wedge^{2}T_{X}^1\Teich(S)$.  
\end{proof}

\subsection{The curvature operator on $\wedge^{2}T_{X}^2\Teich(S)$} 

Let $b_{ij}$ be real and $\sum_{i j}{b_{ij}\frac{\partial}{\partial x_{i}}\wedge \frac{\partial}{\partial y_{j}}} \in \wedge^{2}T_{X}^2\Teich(S)$. Set
\begin{eqnarray*}
H(z,w)=\sum_{i,j=1}^{3g-3}b_{ij}\mu_{i}(w)\cdot \overline{\mu_{j}(z)}.
\end{eqnarray*}

Using a similar computation in proposition \ref{coor}, the formula for the curvature operator on $\wedge^{2}T_{X}^2\Teich(S)$ is given as follows.
\begin{proposition}\label{cooxy} 
Let $Q$ be the curvature operator and $D$ be the same operator as shown in proposition \ref{coor}. Let $\sum_{ij}b_{ij}\frac{\partial}{\partial x_{i}}\wedge \frac{\partial}{\partial y_{j}}$ be an element in $\wedge^{2}T_{X}^2\Teich(S)$, where $b_{ij}$ are real.
Then we have
\begin{eqnarray*}
&&Q(\sum_{ij}b_{ij}\frac{\partial}{\partial x_{i}}\wedge \frac{\partial}{\partial y_{j}},\sum_{ij}b_{ij}\frac{\partial}{\partial x_{i}}\wedge \frac{\partial}{\partial y_{j}})\\
&=& -\int_{X}{D(H(z,z)+\overline{H(z,z)})(H(z,z)+\overline{H(z,z)})}dA(z) \\
&-&2\cdot \int_{X\times X}G(z,w)|H(z,w))|^2dA(w)dA(z)\\
&-&2\cdot \Re\{\int_{X\times X}G(z,w)H(z,w)H(w,z)dA(w)dA(z),\}
\end{eqnarray*}
where $H(z,w)=\sum_{i,j=1}^{3g-3}{b_{ij}\mu_{i}(w)\cdot \overline{\mu_{j}(z)}}$.
\end{proposition}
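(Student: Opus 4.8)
The plan is to mimic the computation carried out in the proof of Proposition \ref{coor}, tracking carefully the extra factors of $\textbf{i}$ that arise because $\frac{\partial}{\partial y_j} = \textbf{i}(\frac{\partial}{\partial t_j} - \frac{\partial}{\partial \overline{t_j}})$ replaces $\frac{\partial}{\partial x_j} = \frac{\partial}{\partial t_j} + \frac{\partial}{\partial \overline{t_j}}$. First I would expand
\begin{eqnarray*}
Q\Bigl(\sum_{ij}b_{ij}\tfrac{\partial}{\partial x_i}\wedge \tfrac{\partial}{\partial y_j}, \sum_{ij}b_{ij}\tfrac{\partial}{\partial x_i}\wedge \tfrac{\partial}{\partial y_j}\Bigr) = \sum_{i,j,k,l} b_{ij}b_{kl}\, R\Bigl(\tfrac{\partial}{\partial x_i}+\tfrac{\partial}{\partial \overline{t_i}}\text{-type terms}\Bigr)
\end{eqnarray*}
using $\frac{\partial}{\partial x_i} = \frac{\partial}{\partial t_i}+\frac{\partial}{\partial \overline{t_i}}$ and $\frac{\partial}{\partial y_j} = \textbf{i}(\frac{\partial}{\partial t_j}-\frac{\partial}{\partial \overline{t_j}})$, so that the wedge $\frac{\partial}{\partial x_i}\wedge\frac{\partial}{\partial y_j}$ produces four mixed terms of the form $\textbf{i}\,\frac{\partial}{\partial t_i}\wedge\frac{\partial}{\partial t_j}$, $-\textbf{i}\,\frac{\partial}{\partial t_i}\wedge\frac{\partial}{\partial \overline{t_j}}$, $\textbf{i}\,\frac{\partial}{\partial \overline{t_i}}\wedge\frac{\partial}{\partial t_j}$, $-\textbf{i}\,\frac{\partial}{\partial \overline{t_i}}\wedge\frac{\partial}{\partial \overline{t_j}}$. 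Feeding each pair into $R(\cdot,\cdot,\cdot,\cdot)$ and discarding the vanishing blocks via Proposition \ref{bpfcoc}(1), the product of two such $\textbf{i}$-factors gives an overall sign $\textbf{i}\cdot\textbf{i}=-1$ relative to the analogous $\wedge^2 T_X^1$ computation (on the surviving Hermitian-type blocks), which is precisely the source of the global minus sign in front of every term of the claimed formula.

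Next I would reorganize the sixteen resulting curvature symbols, using Theorem \ref{cfown} to write each $R_{i\overline{j}k\overline{l}}$ as $(i\overline{j},k\overline{l})+(i\overline{l},k\overline{j})$ and the symmetries in Proposition \ref{bpfcoc} to pair them up, exactly as in the proof of Proposition \ref{coor}. This will sort the expression into three groups: a ``diagonal'' group that collapses — because the $y$-variables carry no sign change under $i\leftrightarrow j$ in the relevant wedge, unlike the $x$-case — into $-\int_X D\bigl(\sum_{ij}b_{ij}\mu_i\overline{\mu_j}+\sum_{ij}b_{ij}\mu_j\overline{\mu_i}\bigr)\bigl(\text{same}\bigr)dA = -\int_X D(H(z,z)+\overline{H(z,z)})(H(z,z)+\overline{H(z,z)})\,dA(z)$; a second group that, after inserting the Green function $G$ of $D$ and using Definition \ref{ewcf}, becomes $-2\Re\{\int_{X\times X}G(z,w)H(z,w)H(w,z)\,dA(w)dA(z)\}$; and a third group that becomes $-2\int_{X\times X}G(z,w)|H(z,w)|^2\,dA(w)dA(z)$. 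The bookkeeping here is the direct analogue of the three-term split in Proposition \ref{coor}, with $F$ replaced by $H$, the single $2\textbf{i}\,\Im$ factor replaced by a $2\Re$ factor (hence $+\overline{H}$ rather than $-\overline{H}$ in the diagonal term), and — crucially — an overall change of sign on all three terms.

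The main obstacle I anticipate is purely combinatorial: correctly matching up which of the sixteen curvature symbols produced by the double wedge cancels against which, and verifying that no stray terms survive. In the $\wedge^2 T_X^1$ case the antisymmetry $a_{ij}\mu_i\overline{\mu_j}$ versus $a_{ij}\mu_j\overline{\mu_i}$ generated the difference $F(z,z)-\overline{F(z,z)}$; here the different placement of the $\textbf{i}$'s flips this to a sum, and one must check that the cross terms of the ``diagonal'' group reassemble into $(H+\overline{H})^2$ rather than something with an unwanted coefficient. I would handle this by writing out the expansion of $\sum_{i,j,k,l}b_{ij}b_{kl}(i\overline{j}-?\,,\ldots)$ symbol-by-symbol once, carefully, in parallel with the display in the proof of Proposition \ref{coor}, and confirming that each of the three aggregated integrals appears with exactly the stated sign and coefficient. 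No new analytic input is needed — only the self-adjointness and positivity of $D$ (Proposition 2.1) and the existence, positivity, and symmetry of $G$ (Proposition \ref{psfg}), all already available — so once the algebra is organized the proposition follows, and it is worth remarking that unlike the $\wedge^2 T_X^1$ case the three terms now all have the same sign, which is why $Q$ will turn out to be non-positive (rather than strictly negative) definite on $\wedge^2 T_X^2\Teich(S)$.
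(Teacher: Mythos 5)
Your plan reproduces the paper's own proof: expand $\frac{\partial}{\partial x_{i}}\wedge\frac{\partial}{\partial y_{j}}$ in the $t_{i},\overline{t_{i}}$ frame (picking up the overall $\textbf{i}\cdot\textbf{i}$ factor), discard vanishing blocks via proposition \ref{bpfcoc}, apply theorem \ref{cfown}, and regroup into the diagonal term $-(i\overline{j}+j\overline{i},k\overline{l}+l\overline{k})$ plus the two cross terms evaluated with the Green function $G$ — exactly the computation in the paper, and the three aggregated integrals you state agree with the proposition. One minor slip in your narrative: it is not an "overall change of sign on all three terms" relative to proposition \ref{coor}, since the term $-2\int_{X\times X}G(z,w)|F(z,w)|^{2}dA(w)dA(z)$ already carries a minus sign there (only the $D$-term and the $\Re$-cross term flip, because the internal signs from $\frac{\partial}{\partial y_{j}}=\textbf{i}(\frac{\partial}{\partial t_{j}}-\frac{\partial}{\partial \overline{t_{j}}})$ interact with the global $-1$), but your planned symbol-by-symbol check would catch this and it does not affect the stated formula.
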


\begin{proof}
Since $\frac{\partial}{\partial x_{i}} =\frac{\partial}{\partial t_{i}}+\frac{\partial}{\partial \overline{t_{i}}}$ and $\frac{\partial}{\partial y_{i}} =\textbf{i}(\frac{\partial}{\partial t_{i}}-\frac{\partial}{\partial \overline{t_{i}}})$, from proposition \ref{bpfcoc},
\begin{eqnarray*}
&& Q(\sum_{ij}b_{ij}\frac{\partial}{\partial x_{i}}\wedge \frac{\partial}{\partial y_{j}},\sum_{ij}b_{ij}\frac{\partial}{\partial x_{i}}\wedge \frac{\partial}{\partial y_{j}})\\
&=& -\sum_{i,j,k,l}b_{ij}b_{kl}(R_{i\overline{j}k\overline{l}}-R_{i\overline{j}\overline{k}l}-R_{\overline{i}jk\overline{l}}+R_{\overline{i}j\overline{k}l})\\
&=& -\sum_{i,j,k,l}b_{ij}b_{kl}(R_{i\overline{j}k\overline{l}}+R_{i\overline{j}l\overline{k}}+R_{j\overline{i}k\overline{l}}+R_{j\overline{i}l\overline{k}}).\\
&=& -\sum_{i,j,k,l}b_{ij}b_{kl}((i\overline{j},k\overline{l})+(i\overline{l},k\overline{j})+(i\overline{j},l\overline{k})+(i\overline{k},l\overline{j}) \ \ (\text{by theorem \ref{cfown}}) \\
&+&(j\overline{i},k\overline{l})+(j\overline{l},k\overline{i})+(j\overline{i},l\overline{k})+(j\overline{k},l\overline{i}))\\
&=& -\sum_{i,j,k,l}b_{ij}b_{kl}(i\overline{j}+j\overline{i},k\overline{l}+l\overline{k})\\
&-& \sum_{i,j,k,l}b_{ij}b_{kl}((i\overline{l},k\overline{j})+(l\overline{i},j\overline{k}))\\
&-& \sum_{i,j,k,l}b_{ij}b_{kl}((i\overline{k},l\overline{j})+(j\overline{l},k\overline{i})).
\end{eqnarray*} 
For the first term, from definition \ref{ewcf},
\begin{eqnarray*}
&& -\sum_{i,j,k,l}b_{ij}b_{kl}(i\overline{j}+j\overline{i},k\overline{l}+l\overline{k})\\
&=& -\int_{X}D(\sum_{ij}b_{ij}\mu_{i}\overline{\mu_{j}}+\sum_{ij}b_{ij}\mu_{j}\overline{\mu_{i}})
(\sum_{ij}b_{ij}\mu_{i}\overline{\mu_{j}}+\sum_{ij}b_{ij}\mu_{j}\overline{\mu_{i}})dA(z)\\
&=& -\int_{X}{D(H(z,z)+\overline{H(z,z)})(H(z,z)+\overline{H(z,z)})}dA(z).
\end{eqnarray*}
For the second term and the third term, using the same argument in the proof of proposition \ref{coor}, we have
\begin{eqnarray*}
&&\sum_{i,j,k,l}b_{ij}b_{kl}((i\overline{l},k\overline{j})+(l\overline{i},j\overline{k}))
\\&=&2\cdot \Re\{\int_{X\times X}G(z,w)H(z,w)H(w,z)dA(w)dA(z)\}
\end{eqnarray*}
and
\begin{eqnarray*}
&&\sum_{i,j,k,l}b_{ij}b_{kl}((i\overline{k},l\overline{j})+(k\overline{i},j\overline{l}))\\
&=&2\cdot \int_{X\times X}G(z,w)|H(z,w)|^2dA(w)dA(z).
\end{eqnarray*}
Combining these three terms we get the proposition.
\end{proof}

Using the same method in theorem \ref{coorn}, one can prove the following non-positivity result.

\begin{theorem} \label{cooxyn}
Under the same conditions of proposition \ref{cooxy}, then $Q$ is non-positive definite on $\wedge^{2}T_{X}^2\Teich(S)$, and the zero level subsets of $Q(\cdot,\cdot)$ are $\{\sum_{ij}b_{ij}\frac{\partial}{\partial x_{i}}\wedge \frac{\partial}{\partial y_{j}};\ b_{ij}=-b_{ji}\}$.
\end{theorem}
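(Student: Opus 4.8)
The plan is to mirror the proof of Theorem \ref{coorn} line by line, tracking how the three sign changes (relative to the $\wedge^2 T_X^1$ case) propagate. Starting from the formula in Proposition \ref{cooxy}, I observe that the three terms are now all preceded by minus signs, and the first term involves the \emph{sum} $H(z,z)+\overline{H(z,z)} = 2\Re\{H(z,z)\}$ rather than the difference. So the first step is to rewrite
\begin{eqnarray*}
-\int_{X}D(H(z,z)+\overline{H(z,z)})(H(z,z)+\overline{H(z,z)})\,dA(z)
=-4\int_{X}D(\Re\{H(z,z)\})(\Re\{H(z,z)\})\,dA(z),
\end{eqnarray*}
which is $\leq 0$ by the positivity of $D$ (Proposition on $D$, part (2)), with equality if and only if $\Re\{H(z,z)\}\equiv 0$.

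Next I would bound the third term exactly as in Theorem \ref{coorn}: by the Cauchy--Schwarz inequality together with the positivity and symmetry of the Green function $G$ (Proposition \ref{psfg}),
\begin{eqnarray*}
\left|\int_{X\times X}G(z,w)H(z,w)H(w,z)\,dA(w)dA(z)\right|
\leq \int_{X\times X}G(z,w)|H(z,w)|^2\,dA(w)dA(z).
\end{eqnarray*}
Combining this with the $-2\int_{X\times X}G(z,w)|H(z,w)|^2$ term from Proposition \ref{cooxy}, the sum of the second and third terms is $\leq 0$, and equals $0$ precisely when equality holds in Cauchy--Schwarz, i.e. when there is a constant $k$ with $H(z,w)=k\cdot\overline{H(w,z)}$; setting $z=w$ forces either $k=1$ or $H(z,z)\equiv 0$, and in either case one deduces $H(z,w)=\overline{H(w,z)}$. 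Adding all three pieces shows $Q(\cdot,\cdot)\leq 0$ on $\wedge^2 T_X^2\Teich(S)$.

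For the characterization of the zero set, equality in the full inequality requires all three terms to vanish simultaneously: $\Re\{H(z,z)\}\equiv 0$ and $H(z,w)=\overline{H(w,z)}$ for all $z,w$. The second condition reads $\sum_{ij}b_{ij}\mu_i(w)\overline{\mu_j(z)} = \sum_{ij}b_{ij}\overline{\mu_i(z)}\mu_j(w) = \sum_{ij}b_{ji}\mu_i(w)\overline{\mu_j(z)}$, so since $\{\mu_i\}$ is a basis of the harmonic Beltrami differentials (and the products $\mu_i\overline{\mu_j}$ are linearly independent as functions), we get $b_{ij}=b_{ji}$. I should check that this symmetric condition automatically forces $\Re\{H(z,z)\}\equiv 0$ as well — indeed if $b_{ij}=b_{ji}$ then $H(z,z)=\sum_{ij}b_{ij}\mu_i(z)\overline{\mu_j(z)}$ is real, so $\Re\{H(z,z)\}=H(z,z)$, which need not vanish; hence I must be more careful. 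Re-examining: the first term forces $\Re\{H(z,z)\}\equiv 0$, i.e. $H(z,z)$ is purely imaginary; but the Cauchy--Schwarz equality gave $H(z,z)=\overline{H(z,z)}$ (from $k=1$ when $H(z,z)\not\equiv 0$), i.e. $H(z,z)$ is real. The two together force $H(z,z)\equiv 0$, which by linear independence of the diagonal terms already yields $b_{ij}+b_{ji}=0$; then one still needs $H(z,w)=\overline{H(w,z)}$ (equivalently $b_{ij}=b_{ji}$) — wait, that would contradict $b_{ij}=-b_{ji}$ unless $b=0$. The resolution, which I would spell out carefully, is that when $H(z,z)\equiv 0$ the Cauchy--Schwarz step degenerates and imposes \emph{no} constraint, so the only surviving condition is $\Re\{H(z,z)\}\equiv 0$; combined with what $b_{ij}=-b_{ji}$ gives (namely $H(z,z)\equiv 0$ automatically, since $H(z,z)=\sum_{ij}b_{ij}\mu_i\overline{\mu_j}$ is anti-symmetrized and hence purely imaginary), one checks conversely that $b_{ij}=-b_{ji}$ makes all three terms vanish. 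Thus the zero set is exactly $\{\sum_{ij}b_{ij}\frac{\partial}{\partial x_i}\wedge\frac{\partial}{\partial y_j}:\ b_{ij}=-b_{ji}\}$.

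The main obstacle is precisely this last bookkeeping: because the first term here involves $\Re\{H\}$ rather than $\Im\{H\}$, the interaction between the ``first term vanishes'' condition and the ``Cauchy--Schwarz is tight'' condition is more subtle than in Theorem \ref{coorn}, and one must argue in the two regimes $H(z,z)\equiv 0$ versus $H(z,z)\not\equiv 0$ separately, then verify that the anti-symmetric locus $b_{ij}=-b_{ji}$ is exactly the common zero set by a direct substitution check. The analytic inputs (positivity of $D$, positivity and symmetry of $G$, Cauchy--Schwarz, linear independence of $\{\mu_i\overline{\mu_j}\}$) are all already available, so no new estimates are needed.
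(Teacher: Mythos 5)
The non-positivity half of your argument is fine and is exactly the paper's: the first term equals $-4\int_X D(\Re\{H(z,z)\})(\Re\{H(z,z)\})\,dA(z)\le 0$ by positivity of $D$, and the cross term is dominated by $\int_{X\times X}G(z,w)|H(z,w)|^2$ via Cauchy--Schwarz plus the positivity and symmetry of $G$. The genuine gap is in the zero-set analysis. Vanishing of the sum of the second and third terms is \emph{not} equivalent to tightness of Cauchy--Schwarz alone; you also need the sign condition $\Re\{\int_{X\times X}G(z,w)H(z,w)H(w,z)\,dA(w)dA(z)\}=-\int_{X\times X}G(z,w)|H(z,w)|^2dA(w)dA(z)$. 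Substituting the proportionality $H(z,w)=k\,\overline{H(w,z)}$ into this (and using $\int G|H(w,z)|^2=\int G|H(z,w)|^2$, by symmetry of $G$) forces $k=-1$, not $k=1$. Your step ``setting $z=w$ forces $k=1$'' tacitly assumes $H(z,z)$ is real, which is the situation of Theorem \ref{coorn} (first condition $F(z,z)=\overline{F(z,z)}$); here the first condition is $H(z,z)=-\overline{H(z,z)}$, and combined with $H(z,z)=k\overline{H(z,z)}$ it likewise yields $k=-1$, which is how the paper argues. With $k=-1$ one gets $\sum_{ij}(b_{ij}+b_{ji})\mu_i(w)\overline{\mu_j(z)}=0$, hence $b_{ij}=-b_{ji}$, and there is no tension with the first condition: antisymmetric $b_{ij}$ makes $H(z,z)$ purely imaginary, so $\Re\{H(z,z)\}\equiv 0$ automatically.

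Because of the wrong $k$, your attempted ``resolution'' does not repair the argument and is itself incorrect: in your non-degenerate regime the conditions $b_{ij}=b_{ji}$ and $\Re\{H(z,z)\}\equiv 0$ would force $b=0$ (the products $\mu_i\overline{\mu_j}$ are linearly independent), which is strictly smaller than the asserted zero locus; and the claim that when $H(z,z)\equiv 0$ the Cauchy--Schwarz step ``imposes no constraint'' is false, since equality in Cauchy--Schwarz constrains the kernel $H(z,w)$ on all of $X\times X$, not just on the diagonal (also ``purely imaginary'' does not mean ``identically zero,'' as your parenthetical asserts). So as written the proposal does not establish the stated characterization, even though the final answer you quote agrees with it. Once $k=-1$ is in hand no case analysis is needed: the zero set is exactly $\{b_{ij}=-b_{ji}\}$, and your converse substitution check (for antisymmetric $b$ one has $H(z,w)H(w,z)=-|H(w,z)|^2$, so all three terms vanish) is correct and should be kept.
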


\begin{proof}
Let $\sum_{ij}b_{ij}\frac{\partial}{\partial x_{i}}\wedge \frac{\partial}{\partial y_{j}}$ be an element in $\wedge^{2}T_{X}^2\Teich(S)$. From proposition \ref{cooxy} we have
\begin{eqnarray*}
&&Q(\sum_{ij}b_{ij}\frac{\partial}{\partial x_{i}}\wedge \frac{\partial}{\partial y_{j}},\sum_{ij}b_{ij}\frac{\partial}{\partial x_{i}}\wedge \frac{\partial}{\partial y_{j}})\\
&=&-\int_{X}{D(H(z,z)+\overline{H(z,z)})(H(z,z)+\overline{H(z,z)})}dA(z) \\
&-&2(\cdot \int_{X\times X}G(z,w)|H(z,w))|^2dA(w)dA(z)\\
&+&\cdot \Re\{\int_{X\times X}G(z,w)H(z,w)H(w,z)dA(w)dA(z)\}).
\end{eqnarray*}
For the first term, since $H(z,z)+\overline{H(z,z)}=2\cdot \Re\{H(z,z)\}$, by the positivity of the operator $D$,
\begin{eqnarray*}
&&-\int_{X}{D(H(z,z)+\overline{H(z,z)})(H(z,z)+\overline{H(z,z)})}dA(z)\\
&=&-4\int_{X}{D(\Re\{H(z,z)\})(\Re\{H(z,z)\})}dA(z)\leq 0.
\end{eqnarray*}
 
For the second term, using the Cauchy-Schwarz inequality, 
\begin{eqnarray*}
  &&|\int_{X\times X}G(z,w)H(z,w)H(w,z)dA(w)dA(z)| \\
 &\leq& \int_{X\times X}|G(z,w)H(z,w)H(w,z)|dA(w)dA(z)\\
 &\leq& \sqrt{ \int_{X\times X}|G(z,w)||H(z,w)|^2dA(w)dA(z)}\\
 &\times&\sqrt{ \int_{X\times X}|G(z,w)||H(w,z)|^2dA(w)dA(z)}\\
 &=& \int_{X\times X}G(z,w)|H(z,w)|^2dA(w)dA(z),
\end{eqnarray*}
since $G$ is positive and symmetric.

Combining these two terms, we get $Q$ is non-positive on $\wedge^{2}T_{X}^2\Teich(S)$.

Using the same argument as in the proof of theorem \ref{coorn}, 
\begin{eqnarray*}
Q(\sum_{ij}b_{ij}\frac{\partial}{\partial x_{i}}\wedge \frac{\partial}{\partial y_{j}},\sum_{ij}b_{ij}\frac{\partial}{\partial x_{i}}\wedge \frac{\partial}{\partial y_{j}})=0
\end{eqnarray*}
if and only if there exists a constant complex number $k$ such that both of the following hold:
\begin{eqnarray*}
\begin{cases}
H(z,z)=-\overline{H(z,z)},\\
H(z,w)=k\cdot \overline{H(w,z)}.
\end{cases}
\end{eqnarray*}

If we let $z=w$, we get $k=-1$. Hence, the last equation is equivalent to
\begin{eqnarray*}
\sum_{ij}(b_{ij}+b_{ji})\mu_{i}(w)\overline{\mu_{j}}(z)=0.
\end{eqnarray*} 
Since $\{\mu_{i}\}_{i\geq 1}$ is a basis,
\begin{eqnarray*}
b_{ij}=-b_{ji}.
\end{eqnarray*}
\end{proof}

\subsection{The curvature operator on $\wedge^{2}T_{X}^3\Teich(S)$}
Let $\textbf{J}$ be the almost complex structure on $\Teich(S)$. Since $\{t_{i}\}$ is a holomorphic coordinate, we have
\begin{eqnarray*}
\textbf{J}\frac{\partial}{\partial x_{i}}=\frac{\partial}{\partial y_{i}} \quad  \textbf{J}\frac{\partial}{\partial y_{i}}=-\frac{\partial}{\partial x_{i}}.
\end{eqnarray*}
Since the Weil-Petersson metric is a K\"ahler metric, $\textbf{J}$ is an isometry on the tangent space. In particular we have
\begin{eqnarray*}
R(V_{1},V_{2},V_{3},V_{4})&=&R(\textbf{J}V_{1},\textbf{J}V_{2},\textbf{J}V_{3},\textbf{J}V_{4})\\
&=&R(\textbf{J}V_{1},\textbf{J}V_{2},V_{3},V_{4})=R(V_{1},V_{2},\textbf{J}V_{3},\textbf{J}V_{4}),
\end{eqnarray*} 
where $R$ is the curvature tensor and $V_{i}$ are real tangent vectors in $T_{X}\Teich(S)$. Once can refer to \cite{Jost} for more details.

Let $C=\sum_{ij}{c_{ij}\frac{\partial}{\partial y_{i}}\wedge \frac{\partial}{\partial y_{j}}}$ be an element in 
$\wedge^{2}T_{X}^3\Teich(S)$, where $c_{ij}$ are real. Set
\begin{eqnarray*}
K(z,w)=\sum_{i,j=1}^{3g-3}{c_{ij}\mu_{i}(w)\cdot\overline{\mu_{j}(z)}}.
\end{eqnarray*}
\begin{proposition}\label{cfot3}
Let $Q$ be the curvature operator, and $\sum_{ij}c_{ij}\frac{\partial}{\partial y_{i}}\wedge \frac{\partial}{\partial y_{j}}$ be an element in $\wedge^{2}T_{X}^3\Teich(S)$.
Then we have
\begin{eqnarray*}
&&Q(\sum_{ij}c_{ij}\frac{\partial}{\partial y_{i}}\wedge \frac{\partial}{\partial y_{j}},\sum_{ij}c_{ij}\frac{\partial}{\partial y_{i}}\wedge \frac{\partial}{\partial y_{j}})\\
&=&\int_{X}{D(K(z,z)-\overline{K(z,z)})(K(z,z)-\overline{K(z,z)})}dA(z) \\
&-&2\cdot \int_{X\times X}G(z,w)|K(z,w))|^2dA(w)dA(z)\\
&+&2\cdot \Re\{\int_{X\times X}G(z,w)K(z,w)K(w,z)dA(w)dA(z)\}.
\end{eqnarray*}
\end{proposition}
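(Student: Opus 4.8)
The plan is to recognize that, up to relabeling, this is the same computation carried out in the proof of Proposition \ref{coor}: substituting $c_{ij}$ for $a_{ij}$ turns $F(z,w)$ into $K(z,w)$, and the asserted identity is then word-for-word the one proved there. The reason the two cases coincide is that $\frac{\partial}{\partial y_i}=\textbf{i}\bigl(\frac{\partial}{\partial t_i}-\frac{\partial}{\partial\overline{t_i}}\bigr)$ carries a single factor of $\textbf{i}$, so evaluating $Q$ on a pair of wedges of such vectors produces an overall factor $\textbf{i}^{4}=1$ and no net sign, in contrast with the mixed case of Proposition \ref{cooxy}.

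Concretely, I would first write $\frac{\partial}{\partial y_i}\wedge\frac{\partial}{\partial y_j}=-\bigl(\frac{\partial}{\partial t_i}-\frac{\partial}{\partial\overline{t_i}}\bigr)\wedge\bigl(\frac{\partial}{\partial t_j}-\frac{\partial}{\partial\overline{t_j}}\bigr)$, expand $Q(C,C)$ multilinearly into sixteen curvature terms per pair of indices, and discard, by Proposition \ref{bpfcoc}, every term not of mixed type $R_{i\overline{j}k\overline{l}}$ and every mixed term whose two holomorphic legs sit in positions $\{1,2\}$ or $\{3,4\}$. The surviving terms have their holomorphic legs in positions $\{1,3\},\{1,4\},\{2,3\}$ or $\{2,4\}$; each carries the factor $\textbf{i}^{4}=1$ times an even number of minus signs from the $-\frac{\partial}{\partial\overline{t_\bullet}}$ slots, hence enters with a plus. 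The result is exactly $\sum_{i,j,k,l}c_{ij}c_{kl}\bigl(R_{i\overline{j}k\overline{l}}+R_{i\overline{j}\,\overline{k}l}+R_{\overline{i}jk\overline{l}}+R_{\overline{i}j\,\overline{k}l}\bigr)$, which is the expression opening the proof of Proposition \ref{coor} with $a$ replaced by $c$. From there I would copy that argument line by line: regroup using the symmetries of Proposition \ref{bpfcoc} into an $(i\overline{j}-j\overline{i},\,k\overline{l}-l\overline{k})$ term plus two mixed terms, pass to integrals via Definition \ref{ewcf} and Theorem \ref{cfown}, and introduce the Green function $G$ of $D$ (Proposition \ref{psfg}) to rewrite the $D(\mu_\bullet\overline{\mu_\bullet})$ factors as double integrals against $G(z,w)$, with $K$ now in the role $F$ played there. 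This produces the three terms of the statement.

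There is also a coordinate-free shortcut: since $\textbf{J}\frac{\partial}{\partial x_i}=\frac{\partial}{\partial y_i}$, the almost complex structure carries $\frac{\partial}{\partial x_i}\wedge\frac{\partial}{\partial x_j}$ to $\frac{\partial}{\partial y_i}\wedge\frac{\partial}{\partial y_j}$, and the K\"ahler identity $R(\textbf{J}V_1,\textbf{J}V_2,\textbf{J}V_3,\textbf{J}V_4)=R(V_1,V_2,V_3,V_4)$ forces $Q(C,C)=Q\bigl(\sum_{ij}c_{ij}\frac{\partial}{\partial x_i}\wedge\frac{\partial}{\partial x_j},\,\sum_{ij}c_{ij}\frac{\partial}{\partial x_i}\wedge\frac{\partial}{\partial x_j}\bigr)$, after which Proposition \ref{coor} gives the formula at once. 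The one step needing genuine care — the main obstacle, such as it is — is the sign bookkeeping distinguishing this case from $\wedge^{2}T_{X}^{2}\Teich(S)$: one must check that the two $y$-slots contribute $\textbf{i}^{2}\cdot\textbf{i}^{2}=+1$ with the surviving curvature legs paired exactly as in the all-$x$ case, so that $K$ enters with the same signs $F$ did (yielding the $K(z,z)-\overline{K(z,z)}$ term and a leading plus sign), and not with the pattern of Proposition \ref{cooxy}, where a single $y$-slot per wedge reverses both the overall sign and the internal pairing and produces an $H(z,z)+\overline{H(z,z)}$ term instead.
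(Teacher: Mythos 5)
Your proposal is correct, and your ``coordinate-free shortcut'' is precisely the paper's own proof: since $\frac{\partial}{\partial y_i}=\textbf{J}\frac{\partial}{\partial x_i}$ and $\textbf{J}$ is an isometry for the K\"ahler metric, $Q(C,C)$ equals $Q$ evaluated on the corresponding all-$x$ bivector, and Proposition \ref{coor} with $K$ in place of $F$ gives the formula. Your direct expansion with the $\textbf{i}^{4}=1$ sign bookkeeping is also a valid (if longer) route, simply reproducing the computation of Proposition \ref{coor} with $c_{ij}$ in place of $a_{ij}$.
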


\begin{proof}
Since $\frac{\partial}{\partial y_{i}} =\textbf{J}\frac{\partial}{\partial x_{i}}=\textbf{J}\frac{\partial}{\partial t_{i}}+\textbf{J}\frac{\partial}{\partial \overline{t_{i}}}$ and \textbf{J} is an 
isometry, by proposition \ref{bpfcoc},
\begin{eqnarray*}
&& Q(\sum_{ij}c_{ij}\frac{\partial}{\partial y_{i}}\wedge \frac{\partial}{\partial y_{j}},\sum_{ij}c_{ij}\frac{\partial}{\partial y_{i}}\wedge \frac{\partial}{\partial y_{j}})\\
&=& \sum_{i,j,k,l}c_{ij}c_{kl}(R_{i\overline{j}k\overline{l}}+R_{i\overline{j}\overline{k}l}+R_{\overline{i}jk\overline{l}}+R_{\overline{i}j\overline{k}l})\\
&=& Q(\sum_{ij}c_{ij}\frac{\partial}{\partial x_{i}}\wedge \frac{\partial}{\partial x_{j}},\sum_{ij}c_{ij}\frac{\partial}{\partial x_{i}}\wedge \frac{\partial}{\partial x_{j}}).
\end{eqnarray*}
By proposition \ref{coor},
\begin{eqnarray*}
&&Q(\sum_{ij}c_{ij}\frac{\partial}{\partial y_{i}}\wedge \frac{\partial}{\partial y_{j}},\sum_{ij}c_{ij}\frac{\partial}{\partial y_{i}}\wedge \frac{\partial}{\partial y_{j}})\\
&=& \int_{X}{D(K(z,z)-\overline{K(z,z)})(K(z,z)-\overline{K(z,z)})}dA(z)\\
&-&2\cdot\int_{X\times X}G(z,w)|K(z,w))|^2dA(w)dA(z)\\
&+&2\cdot \Re\{\int_{X\times X}G(z,w)K(z,w)K(w,z)dA(w)dA(z)\}.
\end{eqnarray*}
\end{proof}

Using the same argument as in the proof of theorem \ref{coorn} we have
\begin{theorem}\label{conot3} 
Let $Q$ be the curvature operator as above, then $Q$ is a negative definite operator on $\wedge^{2}T_{X}^3\Teich(S)$.
\end{theorem}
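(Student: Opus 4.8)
The plan is to read off Theorem \ref{conot3} directly from what has already been established, since Proposition \ref{cfot3} expresses $Q$ on $\wedge^{2}T_{X}^3\Teich(S)$ by literally the same formula that Proposition \ref{coor} gives for $Q$ on $\wedge^{2}T_{X}^1\Teich(S)$, with $K(z,w)=\sum_{ij}c_{ij}\mu_i(w)\overline{\mu_j(z)}$ playing the role of $F(z,w)$. In fact the computation inside the proof of Proposition \ref{cfot3} establishes the stronger identity
\[
Q\Big(\sum_{ij}c_{ij}\frac{\partial}{\partial y_i}\wedge\frac{\partial}{\partial y_j},\ \sum_{ij}c_{ij}\frac{\partial}{\partial y_i}\wedge\frac{\partial}{\partial y_j}\Big)
= Q\Big(\sum_{ij}c_{ij}\frac{\partial}{\partial x_i}\wedge\frac{\partial}{\partial x_j},\ \sum_{ij}c_{ij}\frac{\partial}{\partial x_i}\wedge\frac{\partial}{\partial x_j}\Big),
\]
which says that the linear isomorphism $\frac{\partial}{\partial y_i}\wedge\frac{\partial}{\partial y_j}\mapsto\frac{\partial}{\partial x_i}\wedge\frac{\partial}{\partial x_j}$ carries the quadratic form $Q|_{\wedge^{2}T_{X}^3\Teich(S)}$ onto $Q|_{\wedge^{2}T_{X}^1\Teich(S)}$. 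So I would simply invoke Theorem \ref{coorn}, which gives negative definiteness of $Q$ on $\wedge^{2}T_{X}^1\Teich(S)$, and conclude the same on $\wedge^{2}T_{X}^3\Teich(S)$.

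Alternatively, for a self-contained argument I would rerun the proof of Theorem \ref{coorn} verbatim with $K$ replacing $F$. The first term is
\[
\int_X D\big(K(z,z)-\overline{K(z,z)}\big)\big(K(z,z)-\overline{K(z,z)}\big)\,dA
= -4\int_X D\big(\Im\{K(z,z)\}\big)\big(\Im\{K(z,z)\}\big)\,dA \le 0
\]
by the positivity of $D$, and the sum of the remaining two terms is $\le 0$ by the Cauchy--Schwarz inequality together with the positivity and symmetry of the Green function $G$ (Proposition \ref{psfg}). For the equality case, $Q=0$ forces $K(z,z)=\overline{K(z,z)}$ and $K(z,w)=k\,\overline{K(w,z)}$ for a constant $k$; putting $z=w$ gives $k=1$, so $\sum_{ij}(c_{ij}-c_{ji})\mu_i(w)\overline{\mu_j}(z)=0$, and since $\{\mu_i\}$ is a basis we get $c_{ij}=c_{ji}$, hence $\sum_{ij}c_{ij}\frac{\partial}{\partial y_i}\wedge\frac{\partial}{\partial y_j}=0$. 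This is exactly the statement that $Q$ is negative definite on $\wedge^{2}T_{X}^3\Teich(S)$.

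There is essentially no obstacle here: the analytic content --- positivity of $D$, and positivity plus symmetry of $G$ driving the Cauchy--Schwarz equality analysis --- was already handled in Theorem \ref{coorn}, and the symmetry $c_{ij}=c_{ji}$ extracted in the equality case is precisely what forces the antisymmetric wedge to vanish, upgrading semi-definiteness to definiteness. The only point I would double-check is that the sign pattern in Proposition \ref{cfot3} agrees with that of Proposition \ref{coor}; it does, because the $\textbf{J}$-invariance of the curvature tensor converts the $\partial_{y}$-computation into the $\partial_{x}$-computation without producing any extra signs, so every line of the Theorem \ref{coorn} argument transfers unchanged.
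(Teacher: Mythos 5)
Your proposal is correct and takes essentially the same route as the paper: the paper's proof of Proposition \ref{cfot3} reduces the $\partial/\partial y$-computation to the $\partial/\partial x$-computation via the isometry $\textbf{J}$, and the paper then disposes of Theorem \ref{conot3} by saying the argument of Theorem \ref{coorn} applies verbatim, which is exactly what you do (both in your quick reduction and in your rerun with $K$ in place of $F$, including the equality analysis forcing $c_{ij}=c_{ji}$).
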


\section{Curvature operator on $\wedge^{2}T_{X}\Teich(S)$}\label{4}
Every element in $\wedge^{2}T_{X}\Teich(S)$ can 
be represented by $\sum_{ij}(a_{ij}\frac{\partial}{\partial x_{i}}\wedge \frac{\partial}{\partial x_{j}}+b_{ij}\frac{\partial}{\partial x_{i}}\wedge \frac{\partial}{\partial y_{j}}+ c_{ij}\frac{\partial}{\partial y_{i}}\wedge \frac{\partial}{\partial y_{j}})$.  

\begin{proposition} \label{corf}
Let $Q$ be the curvature operator. Then
\begin{small}
\begin{eqnarray*}
&&Q(\sum_{ij}a_{ij}\frac{\partial}{\partial x_{i}}\wedge \frac{\partial}{\partial x_{j}}+b_{ij}\frac{\partial}{\partial x_{i}}\wedge \frac{\partial}{\partial y_{j}}+ c_{ij}\frac{\partial}{\partial y_{i}}\wedge \frac{\partial}{\partial y_{j}},\\
&&\sum_{ij}a_{ij}\frac{\partial}{\partial x_{i}}\wedge \frac{\partial}{\partial x_{j}}+b_{ij}\frac{\partial}{\partial x_{i}}\wedge \frac{\partial}{\partial y_{j}}+ c_{ij}\frac{\partial}{\partial y_{i}}\wedge \frac{\partial}{\partial y_{j}})\\
&=&Q(\sum_{ij}d_{ij}\frac{\partial}{\partial x_{i}}\wedge \frac{\partial}{\partial x_{j}}+b_{ij}\frac{\partial}{\partial x_{i}}\wedge \frac{\partial}{\partial y_{j}},\sum_{ij}d_{ij}\frac{\partial}{\partial x_{i}}\wedge \frac{\partial}{\partial x_{j}}+b_{ij}\frac{\partial}{\partial x_{i}}\wedge \frac{\partial}{\partial y_{j}}), 
\end{eqnarray*} 
\end{small}
where $d_{ij}=a_{ij}+c_{ij}$.
\end{proposition}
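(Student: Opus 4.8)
The plan is to deduce the identity purely from the bilinearity and symmetry of $Q$ together with the K\"ahler symmetries of the curvature tensor recorded in Section \ref{coos}. Decompose the bivector as $A=A_{1}+A_{2}+A_{3}$, where $A_{1}=\sum_{ij}a_{ij}\frac{\partial}{\partial x_{i}}\wedge\frac{\partial}{\partial x_{j}}\in\wedge^{2}T_{X}^{1}\Teich(S)$, $A_{2}=\sum_{ij}b_{ij}\frac{\partial}{\partial x_{i}}\wedge\frac{\partial}{\partial y_{j}}\in\wedge^{2}T_{X}^{2}\Teich(S)$, and $A_{3}=\sum_{ij}c_{ij}\frac{\partial}{\partial y_{i}}\wedge\frac{\partial}{\partial y_{j}}\in\wedge^{2}T_{X}^{3}\Teich(S)$. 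Let $\textbf{J}\circ$ denote the natural extension of the almost complex structure to $\wedge^{2}T_{X}\Teich(S)$, acting on each factor of a decomposable bivector; since $\textbf{J}\frac{\partial}{\partial x_{i}}=\frac{\partial}{\partial y_{i}}$, we have $A_{3}=\textbf{J}\circ\widetilde{A}_{3}$, where $\widetilde{A}_{3}:=\sum_{ij}c_{ij}\frac{\partial}{\partial x_{i}}\wedge\frac{\partial}{\partial x_{j}}$. Observe that $A_{1}+\widetilde{A}_{3}=\sum_{ij}d_{ij}\frac{\partial}{\partial x_{i}}\wedge\frac{\partial}{\partial x_{j}}$ with $d_{ij}=a_{ij}+c_{ij}$, so the assertion to be proved is precisely $Q(A_{1}+A_{2}+A_{3},\,A_{1}+A_{2}+A_{3})=Q(A_{1}+\widetilde{A}_{3}+A_{2},\,A_{1}+\widetilde{A}_{3}+A_{2})$.

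Next I would record the K\"ahler symmetries in the form needed. Because the Weil-Petersson metric is K\"ahler, $\textbf{J}$ is an isometry and the curvature tensor satisfies $R(V_{1},V_{2},V_{3},V_{4})=R(\textbf{J}V_{1},\textbf{J}V_{2},\textbf{J}V_{3},\textbf{J}V_{4})=R(V_{1},V_{2},\textbf{J}V_{3},\textbf{J}V_{4})$ for all real tangent vectors $V_{i}$, as recalled in Section \ref{coos}. Since $B\mapsto\textbf{J}\circ B$ is a linear map on $\wedge^{2}T_{X}\Teich(S)$ and the decomposable bivectors span that space, these identities extend by the bilinearity of $Q$ to $Q(\textbf{J}\circ B,\textbf{J}\circ C)=Q(B,C)$ and $Q(B,\textbf{J}\circ C)=Q(B,C)$ for all $B,C\in\wedge^{2}T_{X}\Teich(S)$. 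The first of these, specialized to $B=C=\widetilde{A}_{3}$, is exactly the step used in the proof of Proposition \ref{cfot3}.

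Finally I would expand $Q(A,A)=\sum_{p,q}Q(A_{p},A_{q})$ bilinearly and rewrite the three summands involving $A_{3}$ via $A_{3}=\textbf{J}\circ\widetilde{A}_{3}$: one gets $Q(A_{3},A_{3})=Q(\textbf{J}\circ\widetilde{A}_{3},\textbf{J}\circ\widetilde{A}_{3})=Q(\widetilde{A}_{3},\widetilde{A}_{3})$, $Q(A_{1},A_{3})=Q(A_{1},\textbf{J}\circ\widetilde{A}_{3})=Q(A_{1},\widetilde{A}_{3})$, and $Q(A_{2},A_{3})=Q(A_{2},\textbf{J}\circ\widetilde{A}_{3})=Q(A_{2},\widetilde{A}_{3})$. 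Substituting these back and regrouping the nine terms produces $Q(A_{1}+A_{2}+\widetilde{A}_{3},\,A_{1}+A_{2}+\widetilde{A}_{3})$, which, by the observation in the first paragraph, is the right-hand side of the proposition.

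I do not expect a real obstacle here; the only two points that deserve care are that the conversion of $\frac{\partial}{\partial y}$-wedges into $\frac{\partial}{\partial x}$-wedges is effected by $\textbf{J}\circ$ without any stray sign (only $\textbf{J}\frac{\partial}{\partial x_{i}}=\frac{\partial}{\partial y_{i}}$ is invoked, never $\textbf{J}\frac{\partial}{\partial y_{i}}=-\frac{\partial}{\partial x_{i}}$), and that the K\"ahler identities, originally phrased for decomposable bivectors, are genuinely valid on all of $\wedge^{2}T_{X}\Teich(S)$ by bilinearity. The mixed cross term $Q(A_{2},A_{3})$ is the one place where the bookkeeping should be checked explicitly, but it is handled directly by $R(V_{1},V_{2},\textbf{J}V_{3},\textbf{J}V_{4})=R(V_{1},V_{2},V_{3},V_{4})$, which imposes no condition on the first two arguments.
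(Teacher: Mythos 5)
Your argument is correct and takes essentially the same route as the paper: the paper likewise invokes the K\"ahler identity $R(V_{1},V_{2},\textbf{J}V_{3},\textbf{J}V_{4})=R(V_{1},V_{2},V_{3},V_{4})$ (with $\textbf{J}\frac{\partial}{\partial x_{i}}=\frac{\partial}{\partial y_{i}}$) to replace each $\frac{\partial}{\partial y_{i}}\wedge\frac{\partial}{\partial y_{j}}$ by $\frac{\partial}{\partial x_{i}}\wedge\frac{\partial}{\partial x_{j}}$ inside $Q$, and then expands $Q$ bilinearly so that the $a_{ij}$ and $c_{ij}$ coefficients combine into $d_{ij}=a_{ij}+c_{ij}$. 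Your phrasing in terms of the extended action $\textbf{J}\circ$ on bivectors is just a cleaner packaging of the same two identities the paper writes out on basis elements.
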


\begin{proof}
Since the almost complex structure $\textbf{J}$ is an isometry and $\textbf{J}\frac{\partial}{\partial x_{i}}=\frac{\partial}{\partial y_{i}}$, we have 
\begin{eqnarray*}
Q(\frac{\partial}{\partial x_{i}}\wedge \frac{\partial}{\partial x_{j}},\frac{\partial}{\partial y_{i}}\wedge \frac{\partial}{\partial y_{j}})&=&R(\frac{\partial}{\partial x_{i}}, \frac{\partial}{\partial x_{j}},\textbf{J}\frac{\partial}{\partial x_{i}}, \textbf{J}\frac{\partial}{\partial x_{j}})\\
&=&R(\frac{\partial}{\partial x_{i}}, \frac{\partial}{\partial x_{j}},\frac{\partial}{\partial x_{i}}, \frac{\partial}{\partial x_{j}})
\end{eqnarray*} 

and 

\begin{eqnarray*}
Q(\frac{\partial}{\partial x_{i}}\wedge \frac{\partial}{\partial y_{j}},\frac{\partial}{\partial y_{i}}\wedge \frac{\partial}{\partial y_{j}})&=&R(\frac{\partial}{\partial x_{i}}, \frac{\partial}{\partial y_{j}},\textbf{J}\frac{\partial}{\partial x_{i}}, \textbf{J}\frac{\partial}{\partial x_{j}})\\
&=&R(\frac{\partial}{\partial x_{i}}, \frac{\partial}{\partial y_{j}},\frac{\partial}{\partial x_{i}}, \frac{\partial}{\partial x_{j}}).
\end{eqnarray*} 

The conclusion follows by expanding $Q$ and applying the two equations above.
\end{proof}

If one wants to determine whether the curvature operator $Q$ is non-positive definite on $\wedge^{2}T_{X}\Teich(S)$, by proposition \ref{corf} it is sufficient to see if $Q$ is non-positive definite on $Span\{\wedge^{2}T_{X}^{1}\Teich(S),\wedge^{2}T_{X}^{2}\Teich(S)\}$. 
\begin{proposition}\label{cofon12}
Let $Q$ be the curvature operator, $\sum_{ij}a_{ij}\frac{\partial}{\partial x_{i}}\wedge \frac{\partial}{\partial x_{j}}$ be an element in $\wedge^{2}T_{X}^{1}\Teich(S)$, and $\sum_{ij}b_{ij}\frac{\partial}{\partial x_{i}}\wedge \frac{\partial}{\partial y_{j}}$ be an element in $\wedge^{2}T_{X}^{2}\Teich(S)$. Then we have
\begin{eqnarray*}
&&Q(\sum_{ij}a_{ij}\frac{\partial}{\partial x_{i}}\wedge \frac{\partial}{\partial x_{j}},\sum_{ij}b_{ij}\frac{\partial}{\partial x_{i}}\wedge \frac{\partial}{\partial y_{j}})\\
&=&\textbf{i} \cdot \int_{X}{D(F(z,z)-\overline{F(z,z)})\cdot(H(z,z)+\overline{H(z,z)})}dA(z) \\
&-&2\cdot \Im\{\int_{X\times X}G(z,w)F(z,w)\overline{H(z,w)})dA(w)dA(z)\}\\
&-&2\cdot \Im\{\int_{X\times X}G(z,w)F(z,w)H(w,z)dA(w)dA(z)\}.
\end{eqnarray*}
\end{proposition}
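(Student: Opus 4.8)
The plan is to mirror the proofs of Propositions \ref{coor} and \ref{cooxy}, the one genuinely new step being a second appeal to the symmetry of the Green function. First I would pass to the holomorphic frame, using $\frac{\partial}{\partial x_i}=\frac{\partial}{\partial t_i}+\frac{\partial}{\partial \overline{t_i}}$ and $\frac{\partial}{\partial y_j}=\textbf{i}\big(\frac{\partial}{\partial t_j}-\frac{\partial}{\partial \overline{t_j}}\big)$, so that $\frac{\partial}{\partial x_i}\wedge\frac{\partial}{\partial x_j}$ and $\frac{\partial}{\partial x_k}\wedge\frac{\partial}{\partial y_l}$ each become a sum of four decomposable bivectors in the frame $\{\partial_{t},\partial_{\overline t}\}$, the latter carrying an overall $\textbf{i}$. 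Expanding $Q$ bilinearly and discarding every term whose index pattern is not ``two holomorphic and two antiholomorphic'' — these vanish by Proposition \ref{bpfcoc}(1) — leaves a combination of eight curvature components; after using the antisymmetry of the wedge and of $R$ in the first slot-pair together with Proposition \ref{bpfcoc}(2) to put everything in canonical shape $R_{p\overline{q}r\overline{s}}$, this takes the form $\textbf{i}\sum_{i,j,k,l}a_{ij}b_{kl}\big(-R_{i\overline{j}k\overline{l}}-R_{i\overline{j}l\overline{k}}+R_{j\overline{i}k\overline{l}}+R_{j\overline{i}l\overline{k}}\big)$, with the signs read off exactly as in the proofs of Propositions \ref{coor} and \ref{cooxy}.

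Next I would substitute Wolpert's formula in the form $R_{p\overline{q}r\overline{s}}=(p\overline{q},r\overline{s})+(p\overline{s},r\overline{q})$ (Theorem \ref{cfown}), obtaining eight $(\cdot\overline{\cdot},\cdot\overline{\cdot})$-terms that I would sort into two packets of four. The first packet consists of the terms $\mp(i\overline{j},k\overline{l})$, $\mp(i\overline{j},l\overline{k})$, $\pm(j\overline{i},k\overline{l})$, $\pm(j\overline{i},l\overline{k})$; by Definition \ref{ewcf} and the identities $\sum_{ij}a_{ij}(\mu_i\overline{\mu_j}-\mu_j\overline{\mu_i})=F(z,z)-\overline{F(z,z)}$ and $\sum_{kl}b_{kl}(\mu_k\overline{\mu_l}+\mu_l\overline{\mu_k})=H(z,z)+\overline{H(z,z)}$ this packet collapses to a multiple of $\int_X D\big(F(z,z)-\overline{F(z,z)}\big)\big(H(z,z)+\overline{H(z,z)}\big)\,dA(z)$, the first term of the asserted identity. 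For the remaining packet I would insert the Green function, writing $(p\overline{q},r\overline{s})=\int_{X\times X}G(z,w)\,\mu_p(w)\overline{\mu_q(w)}\,\mu_r(z)\overline{\mu_s(z)}\,dA(w)\,dA(z)$, and regroup the sums over $i,j,k,l$ so that the $a$'s build a copy of $F$ and the $b$'s a copy of $H$. Depending on which indices get paired with $z$ and which with $w$, the four terms produce the four kernels $F(z,w)\overline{H(z,w)}$, $F(z,w)H(w,z)$, $\overline{F(w,z)}H(w,z)$ and $\overline{F(w,z)}\,\overline{H(z,w)}$.

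This is where Proposition \ref{psfg}(2) enters: interchanging the two integration variables in the two ``crossed'' integrals (those containing $\overline{F(w,z)}$) and using that $G$ is real and symmetric turns each of them into the complex conjugate of one of the first two, so that the remaining packet collapses to $2\textbf{i}\,\Im\big\{\int_{X\times X}G(z,w)F(z,w)\overline{H(z,w)}\,dA(w)\,dA(z)\big\}$ and $2\textbf{i}\,\Im\big\{\int_{X\times X}G(z,w)F(z,w)H(w,z)\,dA(w)\,dA(z)\big\}$. Adding the contributions of the two packets then yields the stated identity. The step to carry out with the most care is the bookkeeping in the first paragraph and in the regrouping: keeping track of the factors of $\textbf{i}$ coming from $\partial/\partial y_j$, of the signs from antisymmetry in each slot-pair and from Proposition \ref{bpfcoc}(2), and of the relabelings of $i,j,k,l$ needed to recognize $F(z,w)=\sum_{ij}a_{ij}\mu_i(w)\overline{\mu_j(z)}$ and $H(z,w)=\sum_{ij}b_{ij}\mu_i(w)\overline{\mu_j(z)}$ inside the double integrals. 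No analytic ingredient beyond those already used in Propositions \ref{coor} and \ref{cooxy} is required.
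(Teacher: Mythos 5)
Your route is the same as the paper's: expand in the frame $\{\partial/\partial t_i,\partial/\partial\overline{t_i}\}$, discard the unmixed curvature components via Proposition \ref{bpfcoc}, insert Theorem \ref{cfown}, and split the resulting eight pairings into a ``diagonal'' packet giving the $D$-term and two crossed packets handled through the Green function. Your one small variation --- writing out all four crossed kernels and converting the two containing $\overline{F(w,z)}$ into conjugates by swapping the integration variables and using that $G$ is real and symmetric --- is equivalent to the paper's shortcut of using $\overline{(p\overline{q},r\overline{s})}=(q\overline{p},s\overline{r})$ at the level of the pairing before introducing $G$, so nothing is gained or lost there.

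The signs in your sketch, however, do not cohere, and for a signed identity that is the whole content. Your expansion gives the prefactor $+\textbf{i}$ (and indeed this is what the conventions underlying Propositions \ref{coor} and \ref{cooxy} force: the mixed second pair $(\partial/\partial x_k,\partial/\partial y_l)$ contributes $-\textbf{i}$ on $(t_k,\overline{t_l})$ and $+\textbf{i}$ on $(\overline{t_k},t_l)$), whereas the paper's proof starts from $-\textbf{i}$ times the same bracket. Carried out consistently, the crossed packets are $-(i\overline{l},k\overline{j})+(l\overline{i},j\overline{k})=-2\textbf{i}\,\Im\{(i\overline{l},k\overline{j})\}$, and similarly for the other, while the diagonal packet is $\int_X D\bigl(\overline{F(z,z)}-F(z,z)\bigr)\bigl(H(z,z)+\overline{H(z,z)}\bigr)dA(z)$; multiplying these by your $+\textbf{i}$ produces exactly the negative of the displayed right-hand side, whereas your assertion that the packets collapse to $+2\textbf{i}\,\Im\{\cdots\}$ silently flips a sign to compensate. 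So, as written, your two sign claims are mutually inconsistent, and depending on which one is believed the argument proves either the statement or its negative; you need to pin down the prefactor and the packet signs simultaneously. To be fair, the tension is partly inherited from the paper, whose $-\textbf{i}$ in this proof sits oddly with its own Propositions \ref{coor} and \ref{cooxy}; the overall sign of $Q(A,B)$ is immaterial downstream, since in Proposition \ref{cooxym} it merely replaces $F+\textbf{i}H$ by $F-\textbf{i}H$ and the conclusions of Theorem \ref{cooxymn} persist, but a proof of the proposition as stated must resolve it explicitly.
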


\begin{proof}
Since $\frac{\partial}{\partial x_{i}} =\frac{\partial}{\partial t_{i}}+\frac{\partial}{\partial \overline{t_{i}}}$ and $\frac{\partial}{\partial y_{i}} =\textbf{i}(\frac{\partial}{\partial t_{i}}-\frac{\partial}{\partial \overline{t_{i}}})$, by proposition \ref{bpfcoc},
\begin{eqnarray*}
&& Q(\sum_{ij}a_{ij}\frac{\partial}{\partial x_{i}}\wedge \frac{\partial}{\partial x_{j}},\sum_{ij}b_{ij}\frac{\partial}{\partial x_{i}}\wedge \frac{\partial}{\partial y_{j}})\\
&=&(-\textbf{i})\sum_{i,j,k,l}a_{ij}b_{kl}(-R_{i\overline{j}k\overline{l}}+R_{i\overline{j}\overline{k}l}-R_{\overline{i}jk\overline{l}}+R_{\overline{i}j\overline{k}l})\\
&=&(-\textbf{i})\sum_{i,j,k,l}a_{ij}b_{kl}(-R_{i\overline{j}k\overline{l}}-R_{i\overline{j}l\overline{k}}+R_{j\overline{i}k\overline{l}}+R_{j\overline{i}l\overline{k}})\\
&=&(-\textbf{i})\sum_{i,j,k,l}a_{ij}b_{kl}(-(i\overline{j},k\overline{l})-(i\overline{l},k\overline{j})-(i\overline{j},l\overline{k})-(i\overline{k},l\overline{j})\\
&+&(j\overline{i},k\overline{l})+(j\overline{l},k\overline{i})+(j\overline{i},l\overline{k})+(j\overline{k},l\overline{i})) \ \ (\text{by theorem \ref{cfown}})\\
&=&(-\textbf{i})\sum_{i,j,k,l}a_{ij}b_{kl}(j\overline{i}-i\overline{j},k\overline{l}+l\overline{k})\\
&+& (-\textbf{i})\sum_{i,j,k,l}a_{ij}b_{kl}(-(i\overline{l},k\overline{j})+(l\overline{i},j\overline{k}))\\
&+& (-\textbf{i})\sum_{i,j,k,l}a_{ij}b_{kl}(-(i\overline{k},l\overline{j})+(j\overline{l},k\overline{i})).
\end{eqnarray*} 

For the first term, by definition \ref{ewcf},
\begin{eqnarray*}
&& \sum_{i,j,k,l}a_{ij}b_{kl}(j\overline{i}-i\overline{j},k\overline{l}+l\overline{k})\\
&=& \int_{X}D(\sum_{ij}a_{ij}\overline{\mu_{i}}\mu_{j} -\sum_{ij}a_{ij}\mu_{i}\overline{\mu_{j}})
(\sum_{kl}b_{kl}\mu_{k}\overline{\mu_{l}}+\sum_{kl}b_{kl}\mu_{l}\overline{\mu_{k}})dA(z)\\
&=& \int_{X}{D(\overline{F(z,z)}-F(z,z))(H(z,z)+\overline{H(z,z)})}dA(z).
\end{eqnarray*}

For the second term, by using the Green function $G$ of $D$,
\begin{small}
\begin{eqnarray*}
&& \sum_{i,l}a_{ij}b_{kl}(-(i\overline{l},k\overline{j})+(l\overline{i},j\overline{k}))\\
&=&2\textbf{i}\cdot \Im\{\sum_{i,l}a_{ij}b_{kl}(-(i\overline{l},k\overline{j})\}\\
&=&-2\textbf{i}\cdot \Im\{\int_{X} D(\sum_{i}a_{ij}\mu_{i}\overline{\mu_{l}})(\sum_{k}b_{kl}\mu_{k}\overline{\mu_{j}})dA(z)\}\\
&=&-2\textbf{i}\cdot \Im\{\int_{X}\int_{X}G(w,z)\sum_{i}a_{ij}\mu_{i}(w)\overline{\mu_{l}(w)}(\sum_{k}b_{kl}\mu_{k}(z)\overline{\mu_{j}}(z))dA(z)dA(w)\}\\ 
&=&-2\textbf{i}\cdot \Im\{\int_{X\times X}G(z,w)F(z,w)H(w,z)dA(w)dA(z)\}.
\end{eqnarray*}
\end{small}

For the last term, 
\begin{small}
\begin{eqnarray*}
&& \sum_{i,k}a_{ij}b_{kl}(-(i\overline{k},l\overline{j})+(k\overline{i},j\overline{l}))=-2\textbf{i}\cdot \Im\{\sum_{i,k}a_{ij}b_{kl}(i\overline{k},l\overline{j})\}\\
&=& -2\textbf{i}\cdot \Im\{\int_{X} D(\sum_{i}a_{ij}\mu_{i}\overline{\sum_{k}b_{kl}\mu_{k}})(\mu_{l}\overline{\mu_{j}})dA(z)\}\\
&=&-2\textbf{i}\cdot \Im\{\int_{X}\int_{X}G(w,z)\sum_{i}a_{ij}\mu_{i}(w)\overline{\sum_{k}b_{kl}\mu_{k}(w)}(\mu_{l}(z)\overline{\mu_{j}}(z))dA(z)dA(w)\}\\
&=&-2\textbf{i}\cdot \Im\{\int_{X\times X}G(z,w)F(z,w)\overline{H(z,w)}dA(w)dA(z)\}.
\end{eqnarray*}
\end{small}
Combining these three terms above, we get the lemma.
\end{proof}

The following proposition will give the formula for curvature operator $Q$ on $Span\{\wedge^{2}T_{X}^{1}\Teich(S), \wedge^{2}T_{X}^{2}\Teich(S)\}$. 
Setting
\begin{eqnarray*}
A=\sum_{ij}a_{ij}\frac{\partial}{\partial x_{i}}\wedge \frac{\partial}{\partial x_{j}}, \quad \quad B=\sum_{ij}b_{ij}\frac{\partial}{\partial x_{i}}\wedge \frac{\partial}{\partial y_{j}},
\end{eqnarray*}
on $Span\{\wedge^{2}T_{X}^1\Teich(S),\wedge^{2}T_{X}^2\Teich(S)\}$, we have 

\begin{proposition} \label{cooxym}
Let $Q$ be the curvature operator on $\Teich(S)$. Let $A=\sum_{ij}a_{ij}\frac{\partial}{\partial x_{i}}\wedge \frac{\partial}{\partial x_{j}}$ and $B=\sum_{ij}b_{ij}\frac{\partial}{\partial x_{i}}\wedge \frac{\partial}{\partial y_{j}}$. Then we have

\begin{small}
\begin{eqnarray*}
&&Q(A+B,A+B)=\\
&-4&\int_{X}{D (\Im\{F(z,z)+\textbf{i}H(z,z)\})\cdot(\Im\{F(z,z)+\textbf{i}H(z,z)\})dA(z)}\\
&-2&\int_{X\times X}G(z,w)|F(z,w)+\textbf{i}H(z,w))|^2dA(w)dA(z)\\
&+2&\Re\{\int_{X\times X}G(z,w)(F(z,w)+\textbf{i}H(z,w))(F(w,z)+\textbf{i}H(w,z))dA(w)dA(z)\},
\end{eqnarray*}
where $F(z,w)=\sum_{i,j=1}^{3g-3}{a_{ij}\mu_{i}(w)\cdot \overline{\mu_{j}(z)}}$ and 
$H(z,w)=\sum_{i,j=1}^{3g-3}{b_{ij}\mu_{i}(w)\cdot \overline{\mu_{j}(z)}}$.
\end{small}
\end{proposition}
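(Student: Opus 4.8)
The plan is to obtain the formula for $Q(A+B,A+B)$ by combining the three quadratic-form identities already at hand: Proposition~\ref{coor} for $Q(A,A)$, Proposition~\ref{cooxy} for $Q(B,B)$, and Proposition~\ref{cofon12} for the cross term $2Q(A,B)$, using the bilinearity and symmetry of $Q$ so that
\begin{equation*}
Q(A+B,A+B)=Q(A,A)+2Q(A,B)+Q(B,B).
\end{equation*}
So the entire proof is a bookkeeping exercise: add the right-hand sides of the three propositions and verify that the sum collapses into the three claimed integrals, with $F+\mathbf{i}H$ playing the role that $F$ played in Proposition~\ref{coor}.

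I would organize the computation by grouping terms of the same type. First the \emph{diagonal} terms, those involving $D$ acting on functions of $(z,z)$: from Proposition~\ref{coor} we get $\int_X D(F-\overline F)(F-\overline F)\,dA$, from Proposition~\ref{cooxy} we get $-\int_X D(H+\overline H)(H+\overline H)\,dA$, and from Proposition~\ref{cofon12} the cross contribution $2\mathbf{i}\int_X D(F-\overline F)(H+\overline H)\,dA$ (being careful that the statement of \ref{cofon12} writes this with $\overline{F}-F$ in one place and $F-\overline F$ in another — I would reconcile the sign once and for all). Writing $F-\overline F=2\mathbf{i}\,\Im F$ and $H+\overline H=2\,\Re H$, these three pieces assemble, up to the overall factor $-4$, into $\int_X D\big(\Im F - \Re H\big)\big(\Im F - \Re H\big)\,dA$; and since $\Im(F+\mathbf{i}H)=\Im F+\Im(\mathbf{i}H)=\Im F - \Re H$ — wait, $\Im(\mathbf{i}H)=\Re H$, so in fact $\Im(F+\mathbf{i}H)=\Im F+\Re H$; I would double-check this sign against the conventions in \ref{cooxy} and \ref{cofon12}, since it is exactly the kind of place an error hides — the point is that the combination is precisely $\Im\{F+\mathbf{i}H\}$ (or its negative, which does not matter since it appears squared), giving the first line of the claimed formula.

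Next the \emph{off-diagonal Hermitian} terms, those of the form $\int_{X\times X}G(z,w)|{\cdot}|^2$: Proposition~\ref{coor} contributes $-2\int G|F|^2$, Proposition~\ref{cooxy} contributes $-2\int G|H|^2$, and Proposition~\ref{cofon12} contributes $-4\,\Im\int G\,F\overline H = -2\big(\mathbf{i}\int G F\overline H - \mathbf{i}\int G\overline F H\big)$; using $|F+\mathbf{i}H|^2=|F|^2+|H|^2+\mathbf{i}(\overline F H-F\overline H)$ and the reality (after integration, by the symmetry $G(z,w)=G(w,z)$ and swapping variables) of $\int G|F|^2$ and $\int G|H|^2$, these combine to $-2\int_{X\times X}G(z,w)|F(z,w)+\mathbf{i}H(z,w)|^2\,dA\,dA$. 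Finally the \emph{holomorphic-type} terms $\Re\int_{X\times X}G(z,w)\,\phi(z,w)\phi(w,z)$: Proposition~\ref{coor} gives $2\Re\int G\,F(z,w)F(w,z)$, Proposition~\ref{cooxy} gives $-2\Re\int G\,H(z,w)H(w,z)$, and \ref{cofon12} gives $-4\,\Im\int G\,F(z,w)H(w,z)$, which since the pairing $\int G(z,w)F(z,w)H(w,z)$ is symmetric under $z\leftrightarrow w$ equals $-2\Re\big(\mathbf{i}\int G(FH)(z,w)(\cdot)\big)$ appropriately symmetrized; expanding $(F+\mathbf{i}H)(z,w)(F+\mathbf{i}H)(w,z) = FF' - HH' + \mathbf{i}(FH'+HF')$ with primes denoting the swapped arguments, and taking the real part, reproduces exactly $+2\Re\int G(F+\mathbf{i}H)(z,w)(F+\mathbf{i}H)(w,z)$.

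The main obstacle is not conceptual but the sign-and-factor reconciliation across the three source propositions, compounded by the asymmetric way $\Im$ and the factors $-2\mathbf{i}$ versus $2\mathbf{i}$ appear in the statement of Proposition~\ref{cofon12}: one must track carefully that $\Im(\mathbf{i}H)=\Re H$, that $\Re(\mathbf{i} w)=-\Im w$, and that the variable-swap symmetry of the double integrals (a consequence of $G$'s symmetry) is what converts the stray $\Im$'s in the cross term into the honest $|\cdot|^2$ and $\Re$ expressions in the final formula. Once those three identifications are made, the grouping above is forced and the formula follows; I would present it as three short displayed computations, one per term-type, each ending in the corresponding line of the claimed identity.
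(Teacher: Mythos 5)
Your proposal is correct and is essentially the paper's own proof: expand $Q(A+B,A+B)=Q(A,A)+2Q(A,B)+Q(B,B)$, substitute Propositions \ref{coor}, \ref{cooxy}, \ref{cofon12}, and regroup into the three term-types, using $\Im(\mathbf{i}H)=\Re H$, the identity $|a+\mathbf{i}b|^2=|a|^2+|b|^2+2\Im(a\overline{b})$, and the symmetry $G(z,w)=G(w,z)$ to symmetrize the cross terms. Your sign worry in the first grouping resolves as you finally state it: the cross term contributes $-8\int_X D(\Im F)(\Re H)\,dA$, so the quadratic assembles into $-4\int_X D(\Im F+\Re H)(\Im F+\Re H)\,dA$ with $\Im F+\Re H=\Im\{F+\mathbf{i}H\}$, exactly matching the claimed first line.
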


\begin{proof} Since $Q(A,B)=Q(B,A)$,
\begin{eqnarray*}
Q(A+B,A+B)=Q(A,A)+2Q(A,B)+Q(B,B).
\end{eqnarray*}
By proposition \ref{coor}, proposition \ref{cooxy}, and proposition \ref{cofon12} we have
\begin{eqnarray*}
&& Q(A+B,A+B)\\
&=&(\int_{X}D(F(z,z)-\overline{F(z,z)})(F(z,z)-\overline{F(z,z)})dA(z)\\
             &-&\int_{X}{D(H(z,z)+\overline{H(z,z)})(H(z,z)+\overline{H(z,z)})}dA(z)\\
             &+&2\textbf{i} \cdot \int_{X}{D(F(z,z)-\overline{F(z,z)})(H(z,z)+\overline{H(z,z)})}dA(z))\\
            (&-&2\cdot \int_{X\times X}G(z,w)|F(z,w))|^2dA(w)dA(z)\\
             &-&2\cdot \int_{X\times X}G(z,w)|H(z,w))|^2dA(w)dA(z)\\            
             &-&4\cdot \Im\{\int_{X\times X}G(z,w)F(z,w)\overline{H(z,w)})dA(w)dA(z)\})\\
             (&+&2\cdot \Re\{\int_{X\times X}G(z,w)F(z,w)F(w,z)dA(w)dA(z)\}\\
             &-&2\cdot \Re\{\int_{X\times X}G(z,w)H(z,w)H(w,z)dA(w)dA(z)\}\\
             &-&4\cdot \Im\{\int_{X\times X}G(z,w)F(z,w)H(w,z)dA(w)dA(z)\}).
\end{eqnarray*}
The sum of the first three terms is exactly
\begin{eqnarray*}
-4\int_{X}{D(\Im\{F(z,z)+\textbf{i}H(z,z)\})\cdot(\Im\{F(z,z)+\textbf{i}H(z,z)\})dA(z)}.
\end{eqnarray*}

Just as $|a+\textbf{i}b|^2=|a|^2+|b|^2+2\cdot \Im(a\cdot \overline{b})$, where $a$ and $b$ are two complex numbers, the sum of the second three terms is
exactly
\begin{eqnarray*}
-2\cdot \int_{X\times X}G(z,w)|F(z,w)+\textbf{i}H(z,w))|^2dA(w)dA(z).
\end{eqnarray*}  

For the last three terms, since 
\begin{eqnarray*}
\Im(F(z,w)\cdot H(w,z))=-\Re(F(z,w)\cdot(\textbf{i}H(w,z))),
\end{eqnarray*} 
the sum is exactly 

\begin{small}
\begin{eqnarray*}
2\cdot \Re\{\int_{X\times X}G(z,w)(F(z,w)+\textbf{i}H(z,w))(F(w,z)+\textbf{i}H(w,z))dA(w)dA(z)\}.
\end{eqnarray*}
\end{small}

\end{proof}

Furthermore, we have
\begin{theorem} \label{cooxymn}
Under the same conditions as in proposition \ref{cooxym}, $Q$ is non-positive definite on $Span\{\wedge^{2}T_{X}^1\Teich(S),\wedge^{2}T_{X}^2\Teich(S)\}$, and the zero level subsets of $Q(\cdot,\cdot)$ are $\{\sum_{ij}b_{ij}\frac{\partial}{\partial x_{i}}\wedge \frac{\partial}{\partial y_{j}};\ b_{ij}=-b_{ji}\}$ in $Span\{\wedge^{2}T_{X}^{1}\Teich(S),\wedge^{2}T_{X}^{2}\Teich(S)\}$.
\end{theorem}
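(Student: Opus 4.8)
The plan is to read the non-positivity and the zero-set characterization directly off of the closed formula in Proposition \ref{cooxym}, using the same three-step estimate already carried out in Theorems \ref{coorn} and \ref{cooxyn}. Introduce the shorthand $\Phi(z,w):=F(z,w)+\textbf{i}H(z,w)=\sum_{ij}(a_{ij}+\textbf{i}b_{ij})\mu_{i}(w)\overline{\mu_{j}(z)}$, so that Proposition \ref{cooxym} reads
\begin{eqnarray*}
Q(A+B,A+B)&=&-4\int_{X}D(\Im\{\Phi(z,z)\})\cdot(\Im\{\Phi(z,z)\})dA(z)\\
&-&2\int_{X\times X}G(z,w)|\Phi(z,w)|^2dA(w)dA(z)\\
&+&2\Re\Big\{\int_{X\times X}G(z,w)\Phi(z,w)\Phi(w,z)dA(w)dA(z)\Big\}.
\end{eqnarray*}

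First I would dispose of the three terms one at a time. The first term is $\leq 0$ because $D$ is positive (Proposition, part (2)), applied to the real-valued function $\Im\{\Phi(z,z)\}$; it vanishes iff $\Im\{\Phi(z,z)\}\equiv 0$, i.e.\ $\Phi(z,z)=\overline{\Phi(z,z)}$. For the remaining two terms, the Cauchy-Schwarz estimate used verbatim in Theorem \ref{coorn}, together with the positivity and symmetry $G(z,w)=G(w,z)$ of the Green function (Proposition \ref{psfg}), gives
\begin{eqnarray*}
\Big|\int_{X\times X}G(z,w)\Phi(z,w)\Phi(w,z)dA(w)dA(z)\Big|\leq \int_{X\times X}G(z,w)|\Phi(z,w)|^2dA(w)dA(z),
\end{eqnarray*}
so the sum of the last two terms is $\leq 0$. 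Hence $Q(A+B,A+B)\leq 0$, which is the asserted non-positive definiteness on $Span\{\wedge^{2}T_{X}^1\Teich(S),\wedge^{2}T_{X}^2\Teich(S)\}$.

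Next I would extract the equality case. If $Q(A+B,A+B)=0$ then each of the two non-positive pieces vanishes. The first gives $\Phi(z,z)\in\mathbb{R}$ for all $z$. Equality in Cauchy-Schwarz forces, exactly as in Theorem \ref{coorn}, the existence of a unimodular constant $k$ with $\Phi(z,w)=k\cdot\overline{\Phi(w,z)}$; setting $z=w$ and using $\Phi(z,z)=\overline{\Phi(z,z)}\neq 0$ (unless $\Phi\equiv 0$) yields $k=1$, so $\Phi(z,w)=\overline{\Phi(w,z)}$, i.e.\ $\sum_{ij}\big((a_{ij}-a_{ji})+\textbf{i}(b_{ij}+b_{ji})\big)\mu_{i}(w)\overline{\mu_{j}(z)}=0$. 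Since $\{\mu_i\}$ is a basis, the products $\mu_i(w)\overline{\mu_j(z)}$ are linearly independent as functions on $X\times X$, so $a_{ij}=a_{ji}$ and $b_{ij}=-b_{ji}$ for all $i,j$. The first relation says $A=\sum_{ij}a_{ij}\frac{\partial}{\partial x_i}\wedge\frac{\partial}{\partial x_j}=0$, so the kernel is precisely $\{\sum_{ij}b_{ij}\frac{\partial}{\partial x_i}\wedge\frac{\partial}{\partial y_j}:\ b_{ij}=-b_{ji}\}$; conversely one checks from Proposition \ref{cooxy} (or Theorem \ref{cooxyn}) that every such skew element does lie in the kernel, and the combined-formula shows no other element does. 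That completes the proof.

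The only genuinely delicate point is the equality analysis: one must be careful that the Cauchy-Schwarz step is applied to $\int G\,\Phi(z,w)\Phi(w,z)$ against $\int G\,|\Phi(z,w)|^2=\int G\,|\Phi(w,z)|^2$ (this last identity uses the symmetry of $G$ and a relabeling of $z\leftrightarrow w$), so that equality really does pin down $\Phi(z,w)$ and $\overline{\Phi(w,z)}$ as proportional almost everywhere, hence everywhere by continuity of the harmonic Beltrami differentials; and that one cannot a priori split the vanishing of $Q(A+B,A+B)$ into vanishing of $Q(A,A)$, $Q(A,B)$, $Q(B,B)$ separately — it is the single combined integral formula that must be used. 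Everything else is a routine repetition of the arguments in Theorems \ref{coorn} and \ref{cooxyn}.
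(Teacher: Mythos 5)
Your proposal is correct and follows essentially the same route as the paper: estimate the three terms of Proposition \ref{cooxym} using the positivity of $D$ and the Cauchy--Schwarz inequality together with the positivity and symmetry of $G$, then analyze the equality case by setting $z=w$ to force $k=1$ and reading off $a_{ij}=a_{ji}$, $b_{ij}=-b_{ji}$ from the linear independence of the $\mu_i$. Your added care about proportionality holding almost everywhere (hence everywhere by continuity) and about working with the combined formula rather than splitting $Q(A,A)$, $Q(A,B)$, $Q(B,B)$ is consistent with, and slightly more explicit than, the paper's argument.
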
 

\begin{proof} Let us estimate the terms in proposition \ref{cooxym} separately. For the first term,
since $D$ is a positive operator, 
\begin{eqnarray*}
-\int_{X}{D(\Im\{F(z,z)+\textbf{i}H(z,z)\})\cdot(\Im\{F(z,z)+\textbf{i}H(z,z)\})dA(z)}\leq 0.
\end{eqnarray*}

For the third term, by the Cauchy-Schwarz inequality,
\begin{small}
\begin{eqnarray*}
&&|\int_{X\times X}G(z,w)(F(z,w)+\textbf{i}H(z,w))(F(w,z)+\textbf{i}H(w,z))dA(w)dA(z)|\\
&\leq& \int_{X\times X}G(z,w)|(F(z,w)+\textbf{i}H(z,w))(F(w,z)+\textbf{i}H(w,z))|dA(w)dA(z)\\
&\leq& \sqrt{\int_{X\times X}G(z,w)|(F(z,w)+\textbf{i}H(z,w))|^2dA(w)dA(z)} \\
&\times& \sqrt{\int_{X\times X}G(z,w)|(F(w,z)+\textbf{i}H(w,z))|^2dA(w)dA(z)}\\
&=& \int_{X\times X}G(z,w)|(F(z,w)+\textbf{i}H(z,w))|^2dA(w)dA(z).
\end{eqnarray*}
\end{small}
The last equality follows from $G(z,w)=G(w,z)$.

Combining the two inequalities above and the second term in proposition \ref{cooxym}, we see that on $Span\{\wedge^{2}T_{X}^{1}\Teich(S),\wedge^{2}T_{X}^{2}\Teich(S)\}$ $Q$ is non-positive definite. Furthermore, $Q(A+B,A+B)=0$ if and only if there exists a constant $k$ such that both of the following hold:
\begin{eqnarray*}
\begin{cases}
Im\{F(z,z)+\textbf{i}H(z,z)\}=0, \\
F(z,w)+\textbf{i}H(z,w)=k\cdot\overline{(F(w,z)+\textbf{i}H(w,z))}.
\end{cases}
\end{eqnarray*}

If we let $z=w$, we get $k=1$. Hence, the second equation above is 
\begin{eqnarray*}
\sum_{ij}(a_{ij}-a_{ji}+\textbf{i}(b_{ij}+b_{ji}))\mu_{i}(w)\overline{\mu_{j}}(z)=0.
\end{eqnarray*} 
Since $\{\mu_{i}\}_{i\geq 1}$ is a basis,
\begin{eqnarray*}
a_{ij}=a_{ji},\quad \quad b_{ij}=-b_{ji}.
\end{eqnarray*}
That is, $A=0$ and $B=\sum_{ij}b_{ij}\frac{\partial}{\partial x_{i}}\wedge \frac{\partial}{\partial y_{j}}$, where $ b_{ij}=-b_{ji}$.
 
Conversely, if $A=0$ and $B\in \{\sum_{ij}b_{ij}\frac{\partial}{\partial x_{i}}\wedge \frac{\partial}{\partial y_{j}};\ b_{ij}=-b_{ji}\}$, it is not hard to apply proposition \ref{cooxym} to show that $Q(A+B,A+B)=0$.
\end{proof}

Before we prove the main theorem, let us define a natural action of $\textbf{J}$ on $\wedge^{2}T_{X}\Teich(S)$ by
\begin{eqnarray*}
\begin{cases}
\textbf{J}\circ\frac{\partial}{\partial x_{i}}\wedge \frac{\partial}{\partial x_{j}}:=\frac{\partial}{\partial y_{i}}\wedge \frac{\partial}{\partial y_{j}},\\
\textbf{J}\circ\frac{\partial}{\partial x_{i}}\wedge \frac{\partial}{\partial y_{j}}:=-\frac{\partial}{\partial y_{i}}\wedge \frac{\partial}{\partial x_{j}}=\frac{\partial}{\partial x_{j}}\wedge \frac{\partial}{\partial y_{i}},\\
\textbf{J}\circ\frac{\partial}{\partial y_{i}}\wedge \frac{\partial}{\partial y_{j}}:=\frac{\partial}{\partial x_{i}}\wedge \frac{\partial}{\partial x_{j}},
\end{cases}
\end{eqnarray*}
and extend it linearly. It is easy to see that $\textbf{J}\circ\textbf{J}=id.$

Now we are ready to prove theorem \ref{conp}.

\begin{proof}[Proof of Theorem \ref{conp}]
It follows from proposition \ref{corf} and theorem \ref{cooxymn} that $Q$ is non-positive definite. 

If $A=C-\textbf{J}\circ C$ for some a $C \in \wedge^{2}T_{X}\Teich(S)$. Then it is easy to see that $Q(A,A)=0$ since $\textbf{J}$ is an isometry,. 

Assume that $A \in \wedge^{2}T_{X}\Teich(S)$ such that $Q(A,A)=0$. Since $\wedge^{2}T\Teich(S)=Span\{\frac{\partial}{\partial x_{i}}\wedge \frac{\partial}{\partial x_{j}}, 
\frac{\partial}{\partial x_{k}}\wedge \frac{\partial}{\partial y_{l}}, \frac{\partial}{\partial y_{m}}\wedge \frac{\partial}{\partial y_{n}}\}$, there exists $a_{ij},b_{ij}$, and $c_{ij}$ such that 
\begin{eqnarray*}
A=\sum_{ij}a_{ij}\frac{\partial}{\partial x_{i}}\wedge \frac{\partial}{\partial x_{j}}+b_{ij}\frac{\partial}{\partial x_{i}}\wedge \frac{\partial}{\partial y_{j}}+ c_{ij}\frac{\partial}{\partial y_{i}}\wedge \frac{\partial}{\partial y_{j}}.
\end{eqnarray*}
Since $Q(A,A)=0$, by proposition \ref{corf} and theorem \ref{cooxymn} we must have
\begin{eqnarray*}
a_{ij}+c_{ij}=a_{ji}+c_{ji}, b_{ij}=-b_{ji}.
\end{eqnarray*}
That is,
\begin{eqnarray*}
a_{ij}-a_{ji}=-(c_{ij}-c_{ji}), b_{ij}=-b_{ji}.
\end{eqnarray*}
Set
\begin{eqnarray*}
C=\sum_{ij}a_{ij}\frac{\partial}{\partial x_{i}}\wedge \frac{\partial}{\partial x_{j}}+\frac{b_{ij}}{2}\frac{\partial}{\partial x_{i}}\wedge \frac{\partial}{\partial y_{j}}.
\end{eqnarray*}

\textbf{Claim}: $A=C-\textbf{J}\circ C$.\\

Since $\sum_{ij} a_{ij}\frac{\partial}{\partial x_{i}}\wedge \frac{\partial}{\partial x_{j}}=\sum_{i<j}(a_{ij}-a_{ji})\frac{\partial}{\partial x_{i}}\wedge \frac{\partial}{\partial x_{j}}$, we have
\begin{eqnarray*}
&&\textbf{J}\circ \sum_{ij}a_{ij}\frac{\partial}{\partial x_{i}}\wedge \frac{\partial}{\partial x_{j}}=\sum_{i<j}(a_{ij}-a_{ji})\frac{\partial}{\partial y_{i}}\wedge \frac{\partial}{\partial y_{j}}\\
&=&-\sum_{i<j}(c_{ij}-c_{ji})\frac{\partial}{\partial y_{i}}\wedge \frac{\partial}{\partial y_{j}}=-\sum{c_{ij}\frac{\partial}{\partial y_{i}}\wedge \frac{\partial}{\partial y_{j}}}.
\end{eqnarray*}
Similarly, 
\begin{eqnarray*}
\textbf{J}\circ\sum(\frac{b_{ij}}{2}\frac{\partial}{\partial x_{i}}\wedge \frac{\partial}{\partial y_{j}})=-\sum\frac{b_{ij}}{2}\frac{\partial}{\partial x_{i}}\wedge \frac{\partial}{\partial y_{j}}.
\end{eqnarray*}
The claim follows from the two equations above.
\end{proof}

\section{Harmonic maps into $\Teich(S)$}\label{5}

In this section we study the twist-harmonic maps from some domains into the Teichm\"uller space. Before we go to the rank-one hyperbolic space case, let us state the following lemma, which is influenced by lemma $5$ in \cite{Yeung}. 
\begin{lemma}\label{51}
Let $M$ be either $H_{Q,m}=Sp(m,1)/Sp(m)\cdot Sp(1)$ or $H_{O,2}=F_{4}^{-20}/SO(9)$. Then the rank-one Hyperbolic space $M$ cannot be totally geodesically immersed into $\Teich(S)$.
\end{lemma}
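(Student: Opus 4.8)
The plan is to argue by contradiction, combining the Gauss equation with the \emph{sharp} description of $\ker Q$ in Theorem \ref{conp}(2). Suppose $\phi\colon M\to\Teich(S)$ is a totally geodesic isometric immersion; fix $p\in M$, put $X=\phi(p)$ and $V=d\phi_{p}(T_{p}M)\subseteq T_{X}\Teich(S)$. Since $\phi$ is an immersion, $d\phi_{p}$, and hence $\wedge^{2}d\phi_{p}\colon\wedge^{2}T_{p}M\to\wedge^{2}T_{X}\Teich(S)$, is injective with image $\wedge^{2}V$; since the second fundamental form of $\phi$ vanishes, the Gauss equation gives $R^{M}(\cdot,\cdot,\cdot,\cdot)=R^{\Teich(S)}(d\phi_{p}\,\cdot,d\phi_{p}\,\cdot,d\phi_{p}\,\cdot,d\phi_{p}\,\cdot)$, i.e. $Q^{M}(A,A)=Q\bigl(\wedge^{2}d\phi_{p}(A),\wedge^{2}d\phi_{p}(A)\bigr)$ for all $A\in\wedge^{2}T_{p}M$. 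Thus $Q^{M}$ is non-positive definite (it is a restriction of $Q$, by Theorem \ref{conp}(1)), so, both forms being positive semidefinite, $\wedge^{2}d\phi_{p}$ carries the radical of $Q^{M}$ isomorphically onto $\wedge^{2}V\cap\ker Q$. Hence
\begin{equation*}
\dim\ker Q^{M}\ =\ \dim\bigl(\wedge^{2}V\cap\ker Q\bigr).
\end{equation*}

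The first step is to bound the right-hand side from above. Since $\textbf{J}\circ\textbf{J}=\mathrm{id}$, the set $\{B-\textbf{J}\circ B\}$ is exactly the $(-1)$-eigenspace of $\textbf{J}\circ$, so by Theorem \ref{conp}(2) this eigenspace equals $\ker Q$. If $A\in\wedge^{2}V$ satisfies $\textbf{J}\circ A=-A$, then $A=-\textbf{J}\circ A\in\textbf{J}\circ(\wedge^{2}V)=\wedge^{2}(\textbf{J}V)$, so $A\in\wedge^{2}V\cap\wedge^{2}(\textbf{J}V)=\wedge^{2}(V\cap\textbf{J}V)$, using the elementary identity $\wedge^{2}P\cap\wedge^{2}R=\wedge^{2}(P\cap R)$. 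Now $W:=V\cap\textbf{J}V$ is a complex subspace of $T_{X}\Teich(S)$ contained in the real subspace $V$, so $\dim_{\mathbb{C}}W\le\tfrac12\dim_{\mathbb{R}}V=\tfrac12\dim M$; moreover $\textbf{J}\circ$ preserves $\wedge^{2}W$, and its $(-1)$-eigenspace there is the $(2,0)\oplus(0,2)$-part, of real dimension $2\binom{\dim_{\mathbb{C}}W}{2}$. Therefore
\begin{equation*}
\dim\ker Q^{M}\ \le\ 2\binom{\dim M/2}{2}\ =\ \tfrac14(\dim M)(\dim M-2).
\end{equation*}

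The second step computes $\dim\ker Q^{M}$ from the symmetric-space structure. Write $M=G/K$ with Cartan decomposition $\mathfrak g=\mathfrak k\oplus\mathfrak p$ and $T_{p}M\cong\mathfrak p$; with the conventions of Section \ref{np} one checks $Q^{M}(A,A')=-B\bigl(\beta(A),\beta(A')\bigr)$, where $\beta\colon\wedge^{2}\mathfrak p\to\mathfrak k$ is induced by the Lie bracket (well defined since $[\mathfrak p,\mathfrak p]\subseteq\mathfrak k$) and $-B|_{\mathfrak k}$ (minus the Killing form) is positive definite. As $\mathfrak g$ is simple, $[\mathfrak p,\mathfrak p]=\mathfrak k$, so $\beta$ is onto and $\ker Q^{M}=\ker\beta$, whence $\dim\ker Q^{M}=\binom{\dim M}{2}-\dim\mathfrak k$. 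For $M=H_{Q,m}$ with $m\ge 2$ (the case $m=1$ is real hyperbolic $4$-space and lies outside the hypotheses needed for Theorem \ref{hmrt}) this equals $\binom{4m}{2}-\bigl(\dim\mathfrak{sp}(m)+\dim\mathfrak{sp}(1)\bigr)=\binom{4m}{2}-\bigl(m(2m+1)+3\bigr)=6m^{2}-3m-3$, which exceeds the bound $4m^{2}-2m$ of the previous paragraph, since $6m^{2}-3m-3-(4m^{2}-2m)=(2m-3)(m+1)>0$. For $M=H_{O,2}$ it equals $\binom{16}{2}-\dim\mathfrak{so}(9)=120-36=84$, exceeding the bound $\tfrac14\cdot 16\cdot 14=56$. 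In either case the two steps contradict one another, so no such $\phi$ exists.

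The crux, and the place I expect to need the most care, is the first step: one must use the \emph{precise} form of $\ker Q$ given by Theorem \ref{conp}(2) — non-positive definiteness of $Q$ alone is useless here, since for \emph{any} symmetric quotient of noncompact type the curvature operator is already non-positive definite — in order to trap every radical element of $Q|_{\wedge^{2}V}$ inside $\wedge^{2}(V\cap\textbf{J}V)$ and thereby bound $\dim\ker Q^{M}$ by a quantity depending only on $\dim M$, with no dependence on the genus $g$. The remaining ingredients (the Gauss equation for totally geodesic immersions, the involutivity of $\textbf{J}\circ$, and the dimension count $\dim\ker Q^{M}=\binom{\dim M}{2}-\dim\mathfrak k$ for an irreducible noncompact-type symmetric space) are standard.
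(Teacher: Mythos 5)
Your argument is correct on the range it claims, and it takes a genuinely different route from the paper's. The paper works inside a single quaternionic line: it exhibits the explicit $2$-vector $v\wedge Jv+Kv\wedge Iv$, asserts it is annihilated by $Q^{H_{Q,m}}$, and then plays its $(+1)$-eigenvector behaviour under $\textbf{J}\circ$ against Theorem \ref{conp}(2), after assuming ``WLOG'' that the local structure $J$ agrees with $\textbf{J}$ along the image. You instead run a dimension count: by the Gauss equation the radical of $Q^{M}$ injects onto $\wedge^{2}V\cap\ker Q$; the symmetric-space side gives $\dim\mathrm{rad}\,Q^{M}=\binom{\dim M}{2}-\dim\mathfrak{k}$, while Theorem \ref{conp}(2) identifies $\ker Q$ with the $(-1)$-eigenspace of $\textbf{J}\circ$ and hence traps $\wedge^{2}V\cap\ker Q$ inside $\wedge^{2}(V\cap\textbf{J}V)$, of dimension at most $\tfrac14\dim M(\dim M-2)$. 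Your route buys several things: it needs no alignment of $J$ with $\textbf{J}$ (the paper's WLOG is not justified as stated), it treats $H_{Q,m}$ and $H_{O,2}$ by one and the same count, it makes the genus-independence explicit, and it avoids quaternion-K\"ahler curvature identities altogether. The last point is more than cosmetic: the paper's step $R(v,Jv,Kv,Iv)=R(v,Jv,IKv,IIv)$ applies $I$ to only two of the four slots, a K\"ahler-type identity that requires $I$ to be parallel and fails for the merely locally defined $I$ of $H_{Q,m}$; in fact a quaternionic line is a totally geodesic $\mathbb{R}H^{4}$ of constant curvature, so $R(v,Jv,Kv,Iv)=0$ and $Q^{H_{Q,m}}(v\wedge Jv+Kv\wedge Iv,\,\cdot\,)\neq 0$ --- the chosen element is not actually null. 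Your kernel-dimension comparison is therefore the more robust way to extract the contradiction from Theorem \ref{conp}(2). (Your auxiliary identity $\wedge^{2}P\cap\wedge^{2}R=\wedge^{2}(P\cap R)$ is true and worth a one-line proof via a basis adapted to $P\cap R$.)

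Two caveats. First, a sign slip: with the paper's conventions one gets $Q^{M}(A,A')=B(\beta(A),\beta(A'))$ with $B|_{\mathfrak{k}}$ negative definite, not $-B$; as written your formula would make $Q^{M}$ non-negative, contradicting your own restriction statement. Nothing breaks, since the only thing you use is that $B|_{\mathfrak{k}}$ is definite, so $\mathrm{rad}\,Q^{M}=\ker\beta$; but fix the sign. Second, your proof excludes $m=1$, whereas the lemma as stated (and the paper's proof) nominally covers it. This exclusion is honest and essentially forced: $H_{Q,1}\cong\mathbb{R}H^{4}$ has definite curvature operator and zero radical, so no argument based on null $2$-vectors can apply there; and the case $m=1$ is not needed for Theorem \ref{hmrt}, since the Corlette/Mok--Siu--Yeung input requires $Sp(m,1)$ with $m\geq 2$. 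You should state explicitly that you prove the lemma for $m\geq 2$ and for $H_{O,2}$, which is the range actually used.
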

\begin{proof}
For $H_{Q,m}=Sp(m,1)/Sp(m)$, we assume that there is a totally geodesic immersion of $H_{Q,m}$ into $\Teich(S)$. We may select $p\in H_{Q,m}$. Choose a quaternionic line $l_{Q}$ on $T_{p}H_{Q,m}$, and we may assume that $l_{Q}$ is spanned over $R$ by $v,Iv,Jv$, and $Kv$. Without loss of generality, we may assume that $J$ on $l_{Q}\subset T_{p}H_{Q,m}$ is the same as the complex structure on $\Teich(S)$. Choose an element 
\begin{eqnarray*}
v\wedge Jv+Kv\wedge Iv \in \wedge^2 T_{p}H_{Q,m}.
\end{eqnarray*}
Let $Q^{H_{Q,m}}$ be the curvature operator on $H_{Q,m}$. 
\begin{small}
\begin{eqnarray*}
& Q^{H_{Q,m}}(v\wedge Jv+Kv\wedge Iv,v\wedge Jv+Kv\wedge Iv)=\\
&R^{H_{Q,m}}(v,Jv,v,Jv)+R^{H_{Q,m}}(Kv,Iv,Kv,Iv)+2\cdot R^{H_{Q,m}}(v,Jv,Kv,Iv).
\end{eqnarray*}
\end{small}
Since $I$ is an isometry, we have 
\begin{eqnarray*}
R^{H_{Q,m}}(Kv,Iv,Kv,Iv)&=&R^{H_{Q,m}}(IKv,IIv,IKv,IIv)\\
&=&R^{H_{Q,m}}(-Jv,-v,-Jv,-v)\\
&=&R^{H_{Q,m}}(v,Jv,v,Jv). 
\end{eqnarray*}
Similarly,
\begin{eqnarray*}
R^{H_{Q,m}}(v,Jv,Kv,Iv)&=&R^{H_{Q,m}}(v,Jv,IKv,IIv)\\
&=&R^{H_{Q,m}}(v,Jv,-Jv,-v)\\
&=&-R^{H_{Q,m}}(v,Jv,v,Jv).
\end{eqnarray*}
Combining the terms above, we have
\begin{eqnarray*}
Q^{H_{Q,m}}(v\wedge Jv+Kv\wedge Iv,v\wedge Jv+Kv\wedge Iv)=0.
\end{eqnarray*}
Since $f$ is a geodesical immersion,
\begin{eqnarray*}
Q^{\Teich(S)}(v\wedge Jv+Kv\wedge Iv,v\wedge Jv+Kv\wedge Iv)=0.
\end{eqnarray*}
On the other hand, by theorem \ref{conp}, there exists $C$ such that
\begin{eqnarray*}
v\wedge Jv+Kv\wedge Iv=C-\textbf{J}\circ C.
\end{eqnarray*}
Hence,
\begin{eqnarray}\label{16}
&&\textbf{J}\circ(v\wedge Jv+Kv\wedge Iv)\\
\nonumber &=&\textbf{J}\circ(C-\textbf{J}\circ C)=\textbf{J}\circ C-\textbf{J}\circ\textbf{J}\circ C=\textbf{J}\circ C-C\\
\nonumber &=&-(v\wedge Jv+Kv\wedge Iv).
\end{eqnarray}
On the other hand, since \textbf{J} is the same as $J$ in $H_{Q,m}$, we also have
\begin{eqnarray}\label{17}
&&\textbf{J}\circ(v\wedge Jv+Kv\wedge Iv)=(Jv\wedge JJv+JKv\wedge JIv)\\
\nonumber &=&Jv\wedge(-v)+Iv\wedge(-Kv)=v\wedge Jv+Kv\wedge Iv.
\end{eqnarray}

From equations (\ref{16}) and (\ref{17}) we get
\begin{eqnarray*}
v\wedge Jv+Kv\wedge Iv=0,
\end{eqnarray*} 
which is a contradiction since $l_{Q}$ is spanned over $R$ by $v,Iv,Jv$, and $Kv$.

In the case of the Cayley hyperbolic plane $H_{O,2}=F_{4}^{20}/SO(9)$, the argument is similar by 
replacing a quaternionic line by a Cayley line (\cite{Chavel}). 
\end{proof}
 
Now we are ready to prove theorem \ref{hmrt}.

\begin{proof}[Proof of theorem \ref{hmrt}]
Since the sectional curvature operator on $\Teich(S)$ is non-positive definite, $\Teich(S)$ also has non-positive Riemannian sectional curvature in the complexified sense as stated in \cite{MSY}. Suppose that $f$ is not constant. From theorem 2 in \cite{MSY} (also see \cite{Cor}), we know that $f$ should be a totally geodesic immersion, which contradicts lemma \ref{51}. Hence, $f$ must be a constant. 
\end{proof}

\begin{remark}
In \cite{Yeung} it is shown that the image of any homomorphism $\rho$ from $\Gamma$ to $\Mod(S)$ is finite. Hence, $\rho(\Gamma)$ must have a fixed point in $\Teich(S)$ from the Nielsen realization theorem (one can see \cite{SKerck83,Wolpert5}). If we assume that there exists a twist harmonic map $f$ with respect to this homomorphism, then by theorem \ref{hmrt} we know $\rho(\Gamma) \subset \Mod(S)$ will fix the point $f(G/\Gamma) \in \Teich(S)$. 
\end{remark}

\begin{remark}
Conversly, if one can prove that for any homomorphism $\rho$ from $\Gamma$ to $\Mod(S)$ there exists a twist harmonic map $f$ from $G$ into the completion $\overline{\Teich(S)}$ of $\Teich(S)$ such that the image $f(G)\subset \Teich(S)$, theorem \ref{hmrt} tells us that the image $\rho(\Gamma)$ fixes a point in $\Teich(S)$; hence, the image $\rho(\Gamma)$ is finite because $\Mod(S)$ acts properly on $\Teich(S)$.   
\end{remark}

\end{document}